\theoremstyle{plain}
\newtheorem{Thm}{Theorem}[section]
\newtheorem{Lem}[Thm]{Lemma}
\newtheorem{Conj}[Thm]{Conjecture}
\newtheorem{Cor}[Thm]{Corollary}
\newtheorem{Pro}[Thm]{Proposition}
\theoremstyle{definition}
\newtheorem{Def}[Thm]{Definition}
\theoremstyle{remark}
\newtheorem{Rem}[Thm]{Remark}
\numberwithin{equation}{section}
\newcommand{\ITE}[3]{\ifthenelse{#1}{#2}{#3}}\newcommand{\ITEE}[4][]{\ITE{\equal{#2}{#3}}{#4}{#1}}
\newcommand\tylda[1]{\ThisStyle{%
		\setbox0=\hbox{$\SavedStyle#1$}%
		\stackengine{-.1\LMpt}{$\SavedStyle#1$}{%
			\stretchto{\scaleto{\SavedStyle\mkern.2mu\AC}{.5150\wd0}}{.6\ht0}%
		}{O}{c}{F}{T}{S}%
}}
\newcommand{\myData}[1][]{
\author[D.\ Burek]{Dominik Burek}
\address{\ITEE{#1}{*}{D.\ Burek{}\\{}}
Instytut Matematyki\\{}Wydzia\l{} Matematyki 
ul.\ \L{}ojasiewicza 6\\{}30-348 Krak\'{o}w\\{}Poland}
\email{dominik.burek@doctoral.uj.edu.pl}
}
\newcommand{\ZZ}{\mathbb{Z}}
\newcommand{\CC}{\mathbb{C}}
\newcommand{\PP}{\mathbb{P}}
\newcommand*\dd{\mathop{}\!\mathrm{d}}
\newcommand{\id}{\operatorname{id}}
\newcommand{\overbar}[1]{\mkern 1.5mu\overline{\mkern-1.5mu#1\mkern-1.5mu}\mkern 1.5mu}
\renewcommand{\bar}{\overbar}
\newcommand{\Fix}{\operatorname{Fix}}
\newcommand{\age}{\operatorname{age}}
\begin{document}
	\title{Higher dimensional Calabi-Yau manifolds of Kummer type}
	\myData

	\begin{abstract} Based on Cynk-Hulek method from \cite{CH} we construct complex Calabi-Yau varieties of arbitrary dimensions using elliptic curves with an automorphism of order 6. Also we give formulas for Hodge numbers of varieties obtained from that construction. We shall generalize result of \cite{KatsuraSchutt} to obtain arbitrarily dimensional Calabi-Yau manifolds which are Zariski in any characteristic $p\not\equiv 1\pmod{12}.$
		
	\end{abstract}
	
	\subjclass[2010]{Primary 14J32; Secondary 14J40, 14E15}
	\keywords{Calabi--Yau manifolds, crepant resolution, Chen-Ruan cohomology, Zariski manifold.}
	\maketitle
	
	\section{Introduction}
	
	First examples of modular higher dimensional Calabi-Yau manifolds were generalized Kummer type Calabi-Yau $n$-folds constructed by S. Cynk and K. Hulek in \cite{CH}. Recently, these Calabi-Yau varieties were considered in context of the following Voisin conjecture:
	
	\begin{Conj}[\cite{CVC}]\label{CVC} Let $X$ be a smooth projective variety of dimension $n$, such that $h^{n,0}(X)=1$ and $h^{j,0}(X)=0$ for $0<j<n.$ Then any two zero-cycles \linebreak[4] \mbox{$a, a'\in \operatorname{CH}^{n}_{\textrm{num}}(X)$} satisfy $$a\times a'=(-1)^n a'\times a \quad \textrm{in } \operatorname{CH}^{n}_{}(X\times X).$$\end{Conj}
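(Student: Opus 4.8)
The plan is to recast the conjecture as a statement about a single idempotent self-correspondence on $Y=X\times X$ and then to identify the obstruction with a case of the generalized Bloch conjecture. Let $\iota\colon Y\to Y$ be the transposition $(x,x')\mapsto(x',x)$, with graph $\Gamma_\iota$, and set $\pi=\tfrac12\bigl(\Delta_Y-(-1)^n\Gamma_\iota\bigr)\in\operatorname{CH}^{2n}(Y\times Y)_{\mathbb{Q}}$. Because $\Gamma_\iota\circ\Gamma_\iota=\Delta_Y$, the correspondence $\pi$ is idempotent, and on zero-cycles it acts by $\pi_*(a\times a')=\tfrac12\bigl(a\times a'-(-1)^n a'\times a\bigr)$, since $\iota_*(a\times a')=a'\times a$. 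Thus the assertion of the conjecture (with $\mathbb{Q}$-coefficients, the natural setting for what follows) is precisely that $\pi_*$ annihilates every class $a\times a'$ with $a,a'\in\operatorname{CH}^n_{\mathrm{num}}(X)_{\mathbb{Q}}$. First I would therefore reduce to proving that $\pi$ acts as zero on the part of $\operatorname{CH}_0(Y)_{\mathbb{Q}}$ spanned by such products; this step is formal and unconditional.

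The second step is to pin down which submotive is relevant and to run the one genuine computation, the Koszul sign. Granting a Chow--Künneth decomposition of $\mathfrak h(X)$ (known in the cases of interest, conjectural in general) and assuming $n\geq 2$, the hypothesis $h^{1,0}(X)=0$ kills the Albanese, so every $a\in\operatorname{CH}^n_{\mathrm{num}}(X)_{\mathbb{Q}}$ lies in the Albanese kernel and, the hypotheses leaving no other transcendental part, the product $a\times a'$ is carried by the summand $N:=\mathfrak t(X)\otimes\mathfrak t(X)$ of $\mathfrak h(Y)$, where $\mathfrak t(X)=\mathfrak t^n(X)$. Its realization $H^n_{\mathrm{tr}}(X)\otimes H^n_{\mathrm{tr}}(X)$ is concentrated in degree $2n$. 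By Künneth and the hypotheses $h^{p,0}(X)=0$ for $0<p<n$, $h^{n,0}(X)=1$, the space $H^{2n,0}(Y)$ is the single line $H^{n,0}(X)\otimes H^{n,0}(X)\subset H^{2n}(N)$, on which $\iota^*$ acts by $(-1)^{n^2}=(-1)^n$; hence $\pi$ acts on $H^{2n,0}(Y)$ by $\tfrac12\bigl(1-(-1)^n(-1)^n\bigr)=0$. Writing $M=\pi N$ for the resulting submotive (one of the two halves $\operatorname{Sym}^2\mathfrak t(X)$, $\wedge^2\mathfrak t(X)$), I obtain a motive whose cohomology is concentrated in degree $2n$ and satisfies $H^{2n,0}(M)=0$; equivalently, $M$ carries no nonzero holomorphic forms in any degree.

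The third step is to pass from this cohomological vanishing to triviality of zero-cycles: I would invoke the generalized Bloch conjecture in the form ``a motive with no nonzero holomorphic forms has vanishing reduced $\operatorname{CH}_0$'' to conclude $\operatorname{CH}_0(M)_{\mathbb{Q}}=0$, which gives $\pi_*(a\times a')=0$ and hence the conjecture. Concretely this implication is supplied by Kimura--O'Sullivan finite-dimensionality: when $\mathfrak h(X)$ is finite-dimensional, the nilpotence theorem together with a Bloch--Srinivas decomposition of the diagonal forces $M$ to have trivial $\operatorname{CH}_0$ once $H^{2n,0}(M)=0$. Steps 1 and 2 being essentially formal, the entire content of the conjecture is concentrated here.

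The hard part is exactly this third step, and it is where the statement is genuinely open for arbitrary $X$. For a general smooth projective variety satisfying only the stated Hodge-number hypotheses we know neither that $\mathfrak h(X)$ is finite-dimensional nor that the Hodge-theoretic vanishing $H^{2n,0}(M)=0$ is of geometric (rather than merely Hodge-theoretic) origin --- the latter being the generalized Hodge conjecture that underlies any Bloch--Srinivas control of $\operatorname{CH}_0$; already the refined Chow--Künneth decomposition used in Step 2 rests on Murre's conjectures. In effect the reduction shows that Voisin's conjecture for $X$ is equivalent to this instance of the generalized Bloch conjecture for the motive $M=\pi\bigl(\mathfrak t(X)\otimes\mathfrak t(X)\bigr)$. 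The plan therefore yields an unconditional theorem precisely for those $X$ whose motive is finite-dimensional of the expected coniveau type, and it isolates the finite-dimensionality of $\mathfrak h(X)$ together with the generalized Hodge conjecture as the two inputs on which the general case rests.
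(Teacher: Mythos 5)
You should be aware of what this statement is in the paper: it is Conjecture~\ref{CVC}, Voisin's conjecture, quoted from \cite{CVC}, and the paper contains no proof of it because none exists --- it is open in general. The only related result in the paper is Corollary~\ref{cor2}, the assertion that the specific manifolds $X_{6,n}$ satisfy the conjecture, and even there the paper gives no argument of its own, merely ``repeating the argument of R.~Laterveer and C.~Vial given in \cite{LV}''. So there is no proof in the paper to compare yours against, and your submission has to be judged for what it is: a conditional reduction, not a proof. To your credit, you say so explicitly in your final paragraph, which is the correct conclusion.

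Judged as a reduction, your outline is sound and is in fact essentially the Laterveer--Vial strategy that the paper invokes. Step~1 is unconditional and correct: since $\Gamma_\iota\circ\Gamma_\iota=\Delta_Y$, the correspondence $\pi=\tfrac12\bigl(\Delta_Y-(-1)^n\Gamma_\iota\bigr)$ is idempotent and $\pi_*(a\times a')=\tfrac12\bigl(a\times a'-(-1)^n a'\times a\bigr)$, so the conjecture (after tensoring with $\mathbb{Q}$ --- note the paper states it in $\operatorname{CH}^n(X\times X)$ integrally, so you have quietly weakened it to the rational version, which is the version \cite{LV} addresses) says exactly that $\pi_*$ kills these classes. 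The Koszul sign in Step~2 is right: $\iota^*$ multiplies $\omega\otimes\omega$ by $(-1)^{n^2}=(-1)^n$, so $\pi$ annihilates the line $H^{2n,0}$. The genuine gaps are the ones you name: Step~2 already requires a refined Chow--K\"unneth decomposition placing the degree-zero cycles on a transcendental motive $t^n(X)$ (known for surfaces and for special classes, conditional on Murre's conjectures in general), and Step~3 is an instance of the generalized Bloch conjecture, which remains open even granting Kimura--O'Sullivan finite-dimensionality --- note that $H^{2n}(M)$ is far from zero, only its $(2n,0)$-part vanishes, so nilpotence alone does not kill $\operatorname{CH}_0(M)_{\mathbb{Q}}$; one additionally needs a \emph{geometric} coniveau statement for $M$, i.e.\ the relevant case of the generalized Hodge conjecture. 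This is precisely why the unconditional results in \cite{LV} are confined to the Cynk--Hulek varieties: there the motive is of abelian type (hence finite-dimensional) and, as the paper's introduction notes, the argument exploits the special shape of the Hodge diamond, with nonzero entries only on the diagonals, to supply the coniveau input. So: no error in your logic, a correct identification of where the difficulty sits, but no proof --- as indeed there could not be.
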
 
	R. Laterveer and C. Vial in \cite{LV} proved that this conjecture is true for Calabi-Yau manifolds obtained in \cite{CH}. Also, these are the first higher dimensional examples satisfying \ref{CVC}. In the proof they used a particular shape of Hodge diamond of these varieties i.e. nonzero numbers only on diagonals.
    
    In the present paper we will give the missing construction for elliptic curves admitting automorphisms of order 6 and prove that there exists a crepant resolution of these manifolds. That resolution of singularities cannot be constructed as the iterated approach from \cite{CH} or using factorisation of an action of degree 6 into actions of order 2 and 3. Moreover, using Chen-Ruan cohomology we compute Hodge numbers and Euler characteristic of all varieties constructed in that way. 
    
    Kummer surfaces of supersingular elliptic curves in positive characteristic were used by T.\ Katsura and M.\ Sch{\"u}tt in \cite{KatsuraSchutt} to construct first examples of Zariski $K3$ surfaces. We extend these methods to construct arbitrarily dimensional Calabi-Yau manifolds which are Zariski varieties. As a corollary we obtain the first examples of higher dimensional unirational Calabi-Yau manifolds. 
	
	\subsection*{Acknowledgments} I am deeply grateful to my advisor S\l{}awomir Cynk for recommending me to learn this area and his help. We would like to thank anonymous referees whose comments improved the presentation of our paper.
	
	\newpage
	\section{Generalised Kummer type Calabi-Yau manifolds}

	Let $E_{d}$ be an elliptic curve with an order $d$ automorphism $\phi_{d}\colon E_{d}\rightarrow E_{d},$ for $d=2,3,4,6$ i.e. 
	\begin{itemize}
		\item $E_{2}$ is an arbitrary elliptic curves,
		\item $E_{3}$ has the Weierstrass equation $y^{2}=x^3+1,$ and automorphism $\phi_{3}$ is given by $\phi_{3}(x,y) = (\zeta_{3}x,y),$ where $\zeta_{3}$ denotes a fixed $3$-rd root of unity,
		\item $E_{4}$ has the Weierstrass equation $y^{2}=x^3+x,$ and automorphism $\phi_{4}$ is given by $\phi_{4}(x,y) = (-x,iy),$ 
		\item $E_{6}$ has the Weierstrass equation $y^{2}=x^3+1,$ and automorphism $\phi_{4}$ is given by $\phi_{6}(x,y) = (\zeta_{6}^2x,-y),$ where $\zeta_{6}$ denotes a fixed $6$-th root of unity satisfying $\zeta_6^2=\zeta_3.$ 
	\end{itemize} 
For any positive integer, the group $$G_{d,n}:=\{(m_{1},m_{2}, \ldots, m_{n})\in \ZZ_{d}^{n} \colon m_{1}+m_{2}+\ldots + m_{n}=0\}\simeq \ZZ_{d}^{n-1}$$ acts on $E_{d}^{n}$ by $\phi_{d}^{m_{i}}$ on the $i$-th factor. Note that $G_{d,n}$ preserves the canonical bundle $\omega_{E_{d}^{n}}$ of the manifold $E_{d}^{n}.$
	
	\begin{Thm}[\cite{CH}]\label{ch} If $d=2,3,4,$ then there exists a crepant resolution $$ \tylda{E_{d}^{n}/G_{d,n}}\rightarrow E_{d}^{n}/G_{d,n}.$$ Consequently, $X_{d,n}:=\tylda{E_{d}^{n}/G_{d,n}}$ is an $n$-dimensional Calabi-Yau manifold. \end{Thm}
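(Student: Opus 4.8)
The plan is to resolve the finite-quotient singularities of $Y:=E_d^n/G_{d,n}$ stratum by stratum, verify crepancy locally, and then check the global Calabi--Yau conditions. First I would describe the singular locus. Since $G_{d,n}$ is finite and $E_d^n$ is smooth, the action linearises near any point (Cartan's lemma): if $P=(P_1,\dots,P_n)$ has stabiliser $H\subseteq G_{d,n}$, then on $V=T_PE_d^n\cong\CC^n$ an element $(m_1,\dots,m_n)\in H$ acts diagonally by $(z_i)\mapsto(\zeta_d^{m_i}z_i)$, because $d\phi_d=\zeta_d$ on each tangent line forces $d\phi_d^{m_i}=\zeta_d^{m_i}$. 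The defining relation $\sum_i m_i\equiv 0\pmod d$ is exactly $\det=\zeta_d^{\sum_i m_i}=1$, so $H\subseteq\mathrm{SL}(V)$ and every local model $\CC^n/H$ is a Gorenstein (quotient) canonical singularity on which the invariant volume form $dz_1\wedge\cdots\wedge dz_n$ descends. The problem thus reduces to producing, for each such abelian $\mathrm{SL}$-quotient singularity, a crepant resolution canonical enough to glue where the stabiliser jumps.

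Next I would treat $d=2,3,4$ in increasing difficulty. For $d=2$ the stabilisers lie in $\{\pm1\}^n$, the fixed strata are coordinate subspaces through the $2$-torsion, and the local models are products of the node $\CC^2/\{\pm 1\}$ with smooth factors; blowing up the smooth fixed strata in order of increasing dimension resolves $Y$, each blow-up of a codimension-two $A_1$-locus being crepant. For $d=3$ the basic planar model is $(z_1,z_2)\mapsto(\zeta_3 z_1,\zeta_3^{-1}z_2)$, whose quotient is the $A_2$ singularity with crepant minimal resolution, while the higher strata are toric and admit crepant resolutions by the iterated blow-ups of \cite{CH} (equivalently the $G$-Hilbert scheme). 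For $d=4$ I would exploit the factorisation $\langle\phi_4\rangle\supset\langle\phi_4^2\rangle=\langle-\id\rangle$: first resolve the $\ZZ_2$-quotient by $\phi_4^2$ as in the $d=2$ case, then resolve the residual $\ZZ_4/\ZZ_2\cong\ZZ_2$-action induced on that resolution.

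Finally I would assemble $\tilde X=\tylda{E_d^n/G_{d,n}}$ by carrying out these blow-ups along the closed smooth strata of the singular locus, deepest stabiliser first, so that the proper transforms of the shallower strata remain smooth and the whole construction is canonical, hence globally defined. Crepancy being local, the computation above yields $K_{\tilde X}=\pi^*K_Y$, and since the $G_{d,n}$-invariant form $dz_1\wedge\cdots\wedge dz_n$ trivialises $K_Y$ we obtain $K_{\tilde X}\cong\OO_{\tilde X}$. For the cohomological Calabi--Yau condition, quotient singularities are rational, so $H^q(\tilde X,\OO_{\tilde X})=H^q(Y,\OO_Y)=H^{0,q}(E_d^n)^{G_{d,n}}$; an element $(m_i)$ acts on $d\bar{z}_{i_1}\wedge\cdots\wedge d\bar{z}_{i_q}$ by $\zeta_d^{-(m_{i_1}+\cdots+m_{i_q})}$, which is invariant for every $(m_i)$ with $\sum_i m_i\equiv0$ precisely when $q\in\{0,n\}$, giving $h^{0,q}=0$ for $0<q<n$.

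The step I expect to be the main obstacle is the ordering and gluing: I must exhibit a single order of blow-ups making every centre smooth and every blow-up crepant simultaneously over all chart overlaps, and in particular, in the $d=4$ case, verify that the fixed locus of the residual involution on the partial resolution of the $\phi_4^2$-quotient is smooth. This is exactly where the order-two and order-four fixed points of $\phi_4$ on $E_4$ interact, and it is the phenomenon that eventually breaks down for $d=6$, forcing the new construction of this paper.
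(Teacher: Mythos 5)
Your global skeleton is sound: linearisation via Cartan's lemma, the observation that $m_1+\dots+m_n\equiv 0\pmod d$ puts every stabiliser inside $\mathrm{SL}(V)$ so the local models are Gorenstein, crepancy being local, triviality of $K$ via the descended invariant volume form, and the $h^{0,q}$ computation through rational singularities (the invariance argument is essentially that of Lemma \ref{lemat}). But the heart of the theorem --- existence of crepant resolutions of the local models --- is exactly where your argument has a genuine gap, in two places. First, your local description for $d=2$ is wrong: at a point where $k\geq 3$ coordinates are simultaneously $2$-torsion, the stabiliser is the even-sign group $\ZZ_2^{k-1}$ acting on $\CC^k$, which is \emph{not} a product of nodes with smooth factors (its invariant ring is the hypersurface $w^2=u_1u_2\cdots u_k$), so the claim that blowing up smooth fixed strata in order of dimension resolves $Y$ crepantly rests on a false local model. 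Second, and more fundamentally, Gorenstein abelian quotient singularities in dimension $\geq 4$ need not admit crepant resolutions at all: the Remark following Theorem \ref{cor} exhibits the local type $(\zeta_6,\zeta_6,\zeta_6^5,\zeta_6^5)$, which has no junior elements and hence no crepant resolution. So your sentence ``the higher strata are toric and admit crepant resolutions'' is precisely the statement that has to be proved, separately for each local group $\ker(\mathrm{sum})\subseteq\ZZ_d^k$ that actually occurs; it cannot be treated as a routine toric fact. Deferring it to ``the iterated blow-ups of \cite{CH}'' is both circular (the theorem is from \cite{CH}) and mismatched in method: the construction of \cite{CH} is not a stratumwise toric resolution of the global quotient but a pairwise inductive one --- resolve $(X_1\times X_2)/\ZZ_d$ in explicit charts, verify that the lifted automorphism again has the controlled fixed-locus types, and induct on the number of factors --- and it is exactly this induction that guarantees only junior-admitting local types ever appear. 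That is also the scheme the present paper runs for $d=6$ in Propositions \ref{maint} and \ref{maink}, whereas Theorem \ref{ch} itself is imported from \cite{CH} without proof.

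A secondary point: the obstacle you flag at the end (ordering and gluing of the blow-ups) is real for a direct stratification approach, but it is not the first failure point --- even chartwise your proof is incomplete for the reasons above. Your plan for $d=4$, factoring through $\phi_4^2$ and then resolving the residual $\ZZ_2^{n-1}$-action, is in the right spirit and does work for $d=4$, but the content is precisely the postponed verification that the residual action on the partial resolution again has smooth fixed divisors with the right linearisations; note that the same factorisation idea provably breaks for $d=6$ (see the Remark after Theorem \ref{cor}, where the factored actions acquire types such as $(\zeta_3,\zeta_3,\zeta_3^2,\zeta_3^2)$ with no junior element), so this verification is not a formality.
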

	
	
\section{Calabi-Yau manifolds with an action of a group of order $6$}

Let $X_{1}, X_{2}$ be two Calabi-Yau manifolds with automorphisms $\eta_{i}\colon X_{i}\to X_{i}$ (for $i=1,2$) of order $6$ such that
$$\eta_{1}^{*}\left(\omega_{X_{1}}\right)=\zeta_{6}\omega_{X_{1}}\quad \textup{and} \quad \eta_{2}^{*}\left(\omega_{X_{2}}\right)=\zeta_{6}^{5}\omega_{X_{2}}, $$ where $\omega_{X_{i}}$ denotes a chosen generator of $H^{n,0}(X_i),$ for $i=1,2.$

Assume that:
\begin{enumerate} 
	\item the fixed point locus $\Fix(\eta_{1})$ of $\eta_{1}$ is a disjoint union of smooth divisors, in particular $\eta_{1}$ has linearisation of the form $(\zeta_{6}, 1, 1, \ldots, 1)$ near any point of $\Fix(\eta_{1})$,
    \item $\Fix(\eta_{2})$ is a disjoint union of submanifolds of codimension at most 3. In particular $\eta_{2}$ has linearisation of the form
\begin{itemize} 
	\item $(\zeta_{6}^{5}, 1, 1, \ldots, 1)$ near a component of codimension one of $\Fix(\eta_{2})$,
	\item $(\zeta_{6}^{4}, \zeta_{6}, 1,1, \ldots, 1)$ or $(\zeta_{6}^{3}, \zeta_{6}^{2}, 1,1, \ldots, 1)$ near a component of codimension two of $\Fix(\eta_{2})$,
\end{itemize}
    \item $\Fix(\eta_{1}^{2})\setminus \Fix(\eta_{1})$ is a disjoint union of smooth divisors in particular $\eta_{1}^{2}$ has linearisation $(\zeta_{3}^{}, 1, 1, \ldots, 1)$ along any component of $\Fix(\eta_{1}^{2})\setminus \Fix(\eta_{1}),$
    \item $\Fix(\eta_{1}^{3})\setminus \Fix(\eta_{1})$ is a disjoint union of smooth divisors in particular $\eta_{1}^{3}$ has linearisation $(-1, 1, 1, \ldots, 1)$ along any component of $\Fix(\eta_{1}^{3})\setminus \Fix(\eta_{1}),$
    \item $\Fix(\eta_{2}^{2})\setminus \Fix(\eta_{2})$ is a disjoint union of smooth submanifolds of codimension at most 2, so $\eta_{2}^2$ has linearisation of the form $(\zeta_{3}^2, 1, 1, \ldots, 1)$ or $(\zeta_{3},\zeta_{3}, 1, 1, \ldots, 1)$ along any component of $\Fix(\eta_{2}^{2})\setminus \Fix(\eta_{2})$,
    \item $\Fix(\eta_{2}^{3})\setminus \Fix(\eta_{2})$ is a disjoint union of smooth divisors, so $\eta_{2}^3$ has linearisation of the form $(-1, 1, 1, \ldots, 1)$ along any component of $\Fix(\eta_{2}^{3})\setminus \Fix(\eta_{2})$,
    \item the automorphism $\eta_{2}$ has a local linearisation of the form $(\zeta_{6}^{2}, \zeta_{6}^{2},\zeta_{6}^{}, 1,1, \ldots, 1)$ along any codimensional 3 component of $\Fix(\eta_{2}).$
\end{enumerate}

We have the following:

\begin{Pro}\label{maint} Under the above assumptions the quotient $(X_{1}\times X_{2})/(\eta_{1}\times \eta_{2})$ of the product $X_{1}\times X_{2}$ by the action of $\eta_{1}\times \eta_{2}$ admits a crepant resolution of singularities $\tylda{(X_{1}\times X_{2})/(\eta_{1}\times \eta_{2})}.$ Furthermore $\id \times \eta_{2}$ induces an automorphism of order 6 on $\tylda{(X_{1}\times X_{2})/(\eta_{1}\times \eta_{2})}$ that satisfies all assumption we put on $\eta_{2}.$  \end{Pro}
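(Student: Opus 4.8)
The plan is to begin by recording that $g:=\eta_{1}\times\eta_{2}$ preserves the holomorphic volume form: on $X_{1}\times X_{2}$ the canonical form is $\omega_{X_1}\boxtimes\omega_{X_2}$, and by hypothesis $g^{*}(\omega_{X_1}\boxtimes\omega_{X_2})=\zeta_{6}\zeta_{6}^{5}\,\omega_{X_1}\boxtimes\omega_{X_2}=\omega_{X_1}\boxtimes\omega_{X_2}$. Hence $G:=\langle g\rangle\cong\ZZ_{6}$ acts preserving $\omega_{X_{1}\times X_{2}}$, the quotient $Y:=(X_{1}\times X_{2})/G$ is Gorenstein with trivial dualising sheaf, and any crepant resolution of $Y$ is Calabi--Yau. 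Since $(p,q)\in\Fix(g^{k})$ exactly when $p\in\Fix(\eta_{1}^{k})$ and $q\in\Fix(\eta_{2}^{k})$, I would stratify the singular locus of $Y$ by the stabiliser subgroup ($\ZZ_{2}$, $\ZZ_{3}$ or $\ZZ_{6}$) and read off the transverse cyclic quotient singularity along each stratum directly from assumptions (1)--(7), the normal representation being the product of the normal representations of the relevant powers of $\eta_{1}$ and $\eta_{2}$.

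Carrying this out I would obtain the complete list of local models: the surface singularities $\tfrac12(1,1)$, $\tfrac13(1,2)$, $\tfrac16(1,5)$ (types $A_{1},A_{2},A_{5}$, along the codimension--two $\ZZ_{2}$-, $\ZZ_{3}$- and $\ZZ_{6}$-strata); the three--dimensional $\tfrac13(1,1,1)$, $\tfrac16(1,4,1)$, $\tfrac16(1,3,2)$ (from the codimension--two components of $\Fix(\eta_{2}^{2})$ and $\Fix(\eta_{2})$); and the four--dimensional $\tfrac16(1,2,2,1)$, which occurs exactly where the divisor $\Fix(\eta_{1})$ meets the codimension--three component of $\Fix(\eta_{2})$ with linearisation $(\zeta_{6}^{2},\zeta_{6}^{2},\zeta_{6},1,\dots,1)$. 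Every weight vector sums to $0$ modulo the group order, so each model is Gorenstein.

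Next I would resolve these models crepantly and glue. The surface quotients are replaced by their minimal $ADE$ resolutions, and the three--dimensional ones admit crepant resolutions because the junior simplex is then a lattice polygon, which always has a unimodular triangulation. The crux is $\tfrac16(1,2,2,1)$, which (as the introduction stresses) is not obtained by composing the order $2$ and order $3$ resolutions; I would resolve it torically. Its junior simplex $\mathrm{conv}(e_{1},e_{2},e_{3},e_{4})$ contains exactly two age--one lattice points, the interior point $v_{1}=\tfrac16(1,2,2,1)$ and the edge midpoint $v_{3}=\tfrac12(1,0,0,1)$; starring from $v_{1}$ the boundary triangulation refined by $v_{3}$ yields six tetrahedra, each of which is unimodular, giving a projective crepant resolution. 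As the transverse singularity is constant along each (smooth) stratum, these resolutions fit into locally trivial families; performing them from the deepest strata outward produces a global crepant resolution $\tylda{Y}\to Y$, so $\tylda{Y}$ is Calabi--Yau.

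For the final assertion, note that $\sigma:=\id\times\eta_{2}$ commutes with $g$, hence descends to $Y$, and because the blow--up centres are intrinsic (loci of prescribed transverse type, hence $\sigma$-invariant) it lifts to $\tilde\sigma$ on $\tylda{Y}$. Since $\sigma^{k}=\id\times\eta_{2}^{k}\in G$ forces $k\equiv0\pmod6$, the order of $\tilde\sigma$ is $6$, and $\sigma^{*}(\omega_{X_1}\boxtimes\omega_{X_2})=\zeta_{6}^{5}\,\omega_{X_1}\boxtimes\omega_{X_2}$ gives $\tilde\sigma^{*}\omega_{\tylda{Y}}=\zeta_{6}^{5}\omega_{\tylda{Y}}$. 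It then remains to verify (1)--(7) for $\tilde\sigma$: off the exceptional locus the fixed points descend from $X_{1}\times\Fix(\eta_{2})$ and inherit precisely the codimensions and linearisations $(\zeta_{6}^{5},1,\dots)$, $(\zeta_{6}^{4},\zeta_{6},1,\dots)$, $(\zeta_{6}^{3},\zeta_{6}^{2},1,\dots)$, $(\zeta_{6}^{2},\zeta_{6}^{2},\zeta_{6},1,\dots)$ demanded of $\eta_{2}$ (and correspondingly for $\tilde\sigma^{2},\tilde\sigma^{3}$), while on each exceptional divisor one reads the induced action off the explicit toric charts and checks that no non--smooth component and no component of codimension $>3$ is created. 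I expect the main obstacle to be exactly this last double task: producing the crepant resolution of the genuinely four--dimensional model $\tfrac16(1,2,2,1)$ in families (crepancy is not automatic in dimension four, and the model does not factor through order $2$ and order $3$ steps), and then confirming that the induced automorphism $\tilde\sigma$ again satisfies the full list (1)--(7), which is what makes the proposition iterable to all dimensions.
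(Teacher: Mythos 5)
Your plan reproduces the paper's strategy for the first claim: the same stratification by stabiliser, the same list of transverse models (up to permutation of weights: $\tfrac12(1,1)$, $\tfrac13(1,2)$, $\tfrac16(1,5)$, $\tfrac13(1,1,1)$, $\tfrac16(1,1,4)$, $\tfrac16(1,2,3)$, $\tfrac16(1,1,2,2)$), and in the crucial four-dimensional case your star triangulation from the interior junior point refined by the edge midpoint is exactly the resolution the paper writes out in six affine charts, so that part is correct and complete in outline. The $\ZZ_{2}$- and $\ZZ_{3}$-strata are handled in the paper by passing to the intermediate quotients by $\eta^{2}$ and $\eta^{3}$ and invoking Propositions 2.1 and 3.1 of Cynk--Hulek, which is equivalent to your ADE-in-families argument.

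The genuine gap is in the \emph{furthermore} clause, which you explicitly defer (``It then remains to verify (1)--(7)\dots'') rather than prove: for this proposition that verification is not a routine afterthought but the bulk of the paper's proof, and it interacts with a choice you treat as free. You argue that the three-dimensional models admit crepant resolutions ``because the junior simplex \dots always has a unimodular triangulation'', implicitly allowing any triangulation; but for $\tfrac16(1,2,3)$ there are five distinct crepant toric resolutions, and the paper stresses that \emph{only one} of them yields a lift of $\id\times\eta_{2}$ whose local linearisations again satisfy assumptions (1)--(7) (a similar flop ambiguity occurs for $\tfrac16(1,1,4)$). Since the lift exists for every toric choice (the action is diagonal, so it is automatic, as you note), the whole content is the chart-by-chart computation of the lifted weights --- e.g.\ in case (i) the lifts $(1,\zeta_{6}^{5})$, $(\zeta_{6},\zeta_{6}^{4})$, \dots, $(\zeta_{6}^{5},1)$, and analogously in cases (ii)--(iv) --- checking that every component of the new fixed locus is smooth, of codimension at most $3$, and carries one of the admissible linearisations. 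Without fixing the suitable resolution in case (iii) and performing these computations, the claim that the induced automorphism ``satisfies all assumptions we put on $\eta_{2}$'' --- and hence the inductive construction of $X_{6,n}$ for all $n$ --- remains unproved; an unlucky choice of triangulation would actually make the statement false for the resulting resolution.
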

	
\begin{proof} By the assumption we made, the automorphism $\eta:=\eta_{1}\times \eta_{2}$ has a local linearisation around any fixed point of one of the following types:
	\begin{enumerate}[\upshape (i)]
		\item $(\zeta_{6}, \zeta_{6}^{5}, 1,1, \ldots, 1)$,
	    \item $(\zeta_{6},\zeta_{6}, \zeta_{6}^{4}, 1,1, \ldots, 1)$,
        \item $(\zeta_{6},\zeta_{6}^{2}, \zeta_{6}^{3}, 1,1, \ldots, 1)$,
        \item $(\zeta_{6},\zeta_{6}^{}, \zeta_{6}^{2},\zeta_{6}^{2}, 1,1, \ldots, 1)$.
    \end{enumerate} 

We shall use suitable resolution of the cyclic singularity in each case.

\begin{enumerate}[\upshape(i)]	
	\item \par If $\eta$ has a local linearisation given by $(\zeta_{6}, \zeta_{6}^5, 1, 1, \ldots, 1)$ near $\Fix(\eta),$ then in local coordinates, the map from $X_{1}\times X_{2}$ to the resolution is given in affine charts by \begin{align*}\left(x^6, \frac{y}{x^5} \right),\; \left(\frac{x^5}{y}, \frac{y^2}{x^4} \right),\; \left(\frac{x^4}{y^{2}}, \frac{y^3}{x^3} \right),\; \left(\frac{x^3}{y^{3}}, \frac{y^4}{x^2} \right),\;
\left(\frac{x^2}{y^4}, \frac{y^5}{x} \right)\; \textup{or} \; \left(\frac{x}{y^{5}}, y^6 \right). \end{align*}

The action of $\id\times \eta_{2}$ has a linearisation $(1,\zeta_{6}^5,1, \ldots, 1),$ so it lifts to the resolution as $(1,\zeta_{6}^{5}),$ $(\zeta_{6}, \zeta_{6}^{4})$, $(\zeta_{6}^{2}, \zeta_{6}^{3}),$ $(\zeta_{6}^{3}, \zeta_{6}^{2}),$ $(\zeta_{6}^{4}, \zeta_{6})$ and $(\zeta_{6}^{5}, 1),$ respectively. 
    
    \item  If $\eta$ has a local linearisation given by $(\zeta_{6}, \zeta_{6}, \zeta_{6}^{4}, 1, 1, \ldots, 1)$ near $\Fix(\eta),$ then we can use a toric resolution of $\frac{1}{6}(1,1,4)$ singularity. The picture below (\hyperref[Pic1]{fig. 1}) shows decomposed junior simplex, for details see \cite{AlastCrew}. Thus the map from $X_{1}\times X_{2}$ to the resolution is given in affine charts as
		\begin{align*} &\left(x^6, \frac{y}{x}, \frac{z}{x^4} \right),\;\;\; \left(\frac{x^4}{z}, \frac{y}{x}, \frac{z^2}{x^2} \right),\;\;\; \left(\frac{x^2}{z^2}, \frac{y}{x}, z^3 \right),\;\;\; \left(\frac{x}{y}, y^6, \frac{z}{y^4} \right), \\ &\left(\frac{x}{y}, \frac{y^4}{z}, \frac{z^2}{y^2} \right)\;\;\; \textup{or}\;\;\; \left(\frac{x}{y}, \frac{y^2}{z^2}, z^3 \right).\end{align*}
		Therefore the action of $\id\times \eta_{2}$ lifts to the resolution as $(1,\zeta_{6}, \zeta_{6}^{4}),$ $(\zeta_{6}^{2},\zeta_{6}, \zeta_{6}^{2}),$ $(\zeta_{6}^{4},\zeta_{6}^{}, 1),$ $(\zeta_{6}^{5},1,1),$ $(\zeta_{6}^{5},1,1),$ $(\zeta_{6}^{5},1,1).$ 
		
	\begin{figure}[!h]
		\begin{center}\includegraphics[width=0.4\textwidth]{flop114.1}\caption*{Figure 1.}\label{Pic1} \end{center} 
	\end{figure}

\item If $\eta$ has a local linearisation given by $(\zeta_{6}, \zeta_{6}^{2}, \zeta_{6}^{3}, 1, 1, \ldots, 1)$ near $\Fix(\eta),$ then we use again toric resolution of $\frac{1}{6}(1,2,3)$ singularity. Note that there are five different decompositions of junior simplex which give a toric resolution. Only one of them (\hyperref[Pic2]{fig. $2$}) is suitable for our considerations. For the chosen resolution, the map from $X_{1}\times X_{2}$ to the resolution is given in affine charts as
\begin{align*} &\left(x^6, \frac{z}{x^3}, \frac{y}{x^2} \right),\;\;\; \left(\frac{x^3}{z}, z^{2}, \frac{y}{x^2} \right),\;\;\; \left(\frac{x^2}{y}, z^{2}, \frac{y^{2}}{xz}\right),\;\;\; \left(\frac{xz}{y^{2}}, z^{2}, \frac{y^{3}}{z^{2}} \right), \\ &\left(\frac{z^2}{y^3}, y^3, \frac{xy}{z} \right)\;\;\; \textup{or} \;\;\;\left(\frac{z}{xy}, y^3, \frac{x^2}{y} \right).\end{align*}

\begin{figure}[!h]
	\begin{center}\includegraphics[width=0.4\textwidth]{123.1}\caption*{Figure 2.}\label{Pic2} \end{center} 
\end{figure}
		
The action of $\id\times \eta_{2}$ has a local linearisation $(1,\zeta_{6}^2,\zeta_{6}^{3},1, \ldots, 1),$ hence it lifts to the resolution as $(1, \zeta_{6}^3, \zeta_{6}^2),$ $(\zeta_{6}^{3}, 1, \zeta_{6}^2),$ 
$(\zeta_{6}^{4}, 1, \zeta_{6}),$ $(\zeta_{6}^5, 1, 1),$ $(1,1,\zeta_{6}^5),$ $(\zeta_{6}, 1, \zeta_{6}^4),$ respectively.
		
\item If $\eta$ has a local linearisation given by $(\zeta_{6}, \zeta_{6}, \zeta_{6}^2, \zeta_{6}^2, 1, 1, \ldots, 1)$ near $\Fix(\eta_{2}),$ then the map is given by
\begin{align*} &\left(x^6, \frac{y}{x}, \frac{z}{x^2}, \frac{t}{x^2} \right),\;\;\; \left(\frac{x^2}{z}, \frac{y}{x}, z^3, \frac{t}{z} \right),\;\;\; \left(\frac{x^2}{t}, \frac{y}{x}, t^3,\frac{z}{t}\right),\\ &\left(\frac{x}{y}, y^{6}, \frac{z}{y^{2}}, \frac{t}{y^2} \right),\;\;\; \left(\frac{x}{y}, \frac{y^2}{z}, z^3, \frac{t}{z} \right)\;\;\; \textup{or}\;\;\; \left(\frac{x}{y}, \frac{y^2}{t}, \frac{z}{t}, t^3 \right).\end{align*}

The action of $\id\times \eta_{2}$ has a local linearisation $(1,\zeta_{6},\zeta_{6}^{2},\zeta_{6}^2,1,1, \ldots, 1),$ hence it lifts to the resolution as $(1,\zeta_{6}, \zeta_{6}^{2}, \zeta_{6}^2),$ $(\zeta_{6}^4,\zeta_{6}, 1, 1),$ $(\zeta_{6}^4,\zeta_{6}, 1, 1),$ $(\zeta_{6}^5,1,1,1),$ $(\zeta_{6}^5,1,1,1)$, $(\zeta_{6}^5,1,1,1).$
\end{enumerate}

In all considered cases the action on $\tylda{(X_{1}\times X_{2})/\eta}$ induced by $\id\times \eta_{2}$ satisfies the assumptions we made on the action $\eta_{2}.$
		
Finally near the points of $\Fix(\eta_{}^{2})\setminus \Fix(\eta_{})$ and $\Fix(\eta_{}^{3})\setminus \Fix(\eta_{})$ we first consider the quotient $\left(X_{1}\times X_{2}\right)/ \eta^{2}$ (resp. $\left(X_{1}\times X_{2}\right)/ \eta^{3}$), then using Prop. 2.1 and 3.1 of \cite{CH} we construct crepant resolutions of $$\tylda{\left(\left(X_{1}\times X_{2}\right)/ \eta^{2}\right)/ \eta^{3}}\quad \left(\textup{resp. } \tylda{\left(\left(X_{1}\times X_{2}\right)/ \eta^{3}\right)/\eta^{2}}\right).$$ 
\end{proof}

We can iterate the procedure in Proposition \ref{maint}. Consider Calabi-Yau manifolds $X_{1},$ $X_{2},$ $\ldots,$ $X_{n}$ with automorphisms $\phi_{i}$ of order $6$ such that
\begin{itemize}
	\item $\phi_{i}^{*}(\omega_{X_{i}})=\zeta_{6}\omega_{X_{i}}$ where $\omega_{X_{i}}$ is a canonical form on $X_{i},$
	\item $\phi_{1}$ satisfies the assumptions we put on $\eta_{1}$ in \ref{maint},
	\item $\phi_{i}^5$ satisfies, for $i=2,\ldots, n$, the assumptions we put on $\eta_{2}$ in \ref{maint}.
\end{itemize}
The group $G_{6,n}$ acts on $X_{1}\times X_{2} \times \ldots \times X_{n}$ as $$\left(\phi_{1}^{m_{1}}(x_{1}), \phi_{2}^{m_{2}}(x_{2}), \ldots, \phi_{n}^{m_{n}}(x_{n})\right)$$ for $(m_{1}, m_{2}, \ldots, m_{n})\in G_{6,n}$ and $x_{i}\in X_{i}$ for $i=1,2,\ldots, n.$

\begin{Pro}\label{maink} The quotient of the product $X_{1}\times X_{2} \times \ldots \times X_{n}$ by the action of $G_{6,n}$ has a crepant resolution of singularities which is a Calabi-Yau manifold and such that the action of $\ZZ_{6}^{n}$ on $X_{1}\times X_{2} \times \ldots \times X_{n}$ lifts to a purely non-symplectic action of $\ZZ_{6}$ on this resolution.  \end{Pro}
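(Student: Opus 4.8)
The plan is to prove the two assertions separately: first produce the crepant resolution by induction on $n$, then analyse the residual $\ZZ_6$-action on the holomorphic volume form. The engine of the induction is Proposition \ref{maint}, whose argument is entirely local, supplemented by the order-$2$ and order-$3$ crepant resolutions of Propositions 2.1 and 3.1 of \cite{CH} for the strata where only $\eta^2$ or $\eta^3$ degenerate. The base cases are immediate: $n=1$ is trivial since $G_{6,1}$ is trivial, and $n=2$ is exactly Proposition \ref{maint} applied with $\eta_1=\phi_1$ and $\eta_2=\phi_2^5$, since $(\phi_2^5)^*\omega_{X_2}=\zeta_6^5\omega_{X_2}$ and, by hypothesis, $\phi_2^5$ is of $\eta_2$-type. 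The inductive hypothesis I would carry is stronger than mere existence: I would record the intermediate resolution $Y_{k}$ of $(X_1\times\cdots\times X_k)/G_{6,k}$ \emph{together with} the full linearisation data of the residual order-$6$ automorphism along its fixed strata, since this is what must be fed into the next application of the local analysis.

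For the inductive step I would realise the $G_{6,n}$-quotient as an iterated quotient. Setting $H=\{(m_1,\dots,m_{n-1},0)\in G_{6,n}\}\cong G_{6,n-1}$, the sum map gives an exact sequence $0\to H\to G_{6,n}\to\ZZ_6\to 0$, whence
\[
(X_1\times\cdots\times X_n)/G_{6,n}=\bigl(\,(X_1\times\cdots\times X_{n-1})/G_{6,n-1}\,\times X_n\,\bigr)/\ZZ_6 .
\]
After replacing the first factor by its equivariant resolution $Y_{n-1}$ from the inductive hypothesis, I am reduced to a $\ZZ_6$-quotient of $Y_{n-1}\times X_n$. Taking the generator of $G_{6,n}/H$ represented by $e_1-e_n$, it acts as $\tau\times\phi_n^5$, where $\tau$ is the automorphism of $Y_{n-1}$ induced by $\phi_1\times\id\times\cdots\times\id$ and $\phi_n^5$ is of $\eta_2$-type on $X_n$. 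Since $\tau^*$ multiplies the $(n-1)$-form by $\zeta_6$ and $(\phi_n^5)^*$ multiplies $\omega_{X_n}$ by $\zeta_6^5$, the product preserves the canonical bundle, so the quotient is Gorenstein Calabi--Yau; a crepant resolution of it is then automatically Calabi--Yau. Composing the crepant map $Y_{n-1}\times X_n\to (X_1\times\cdots\times X_{n-1})/G_{6,n-1}\times X_n$ with the resolution of the $\ZZ_6$-quotient exhibits $Y_n$ as a crepant resolution of $(X_1\times\cdots\times X_n)/G_{6,n}$.

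For the final assertion I would argue directly on forms. The residual group $\ZZ_6^n/G_{6,n}\cong\ZZ_6$ acts on the quotient through a generator whose lift to $X_1\times\cdots\times X_n$ (for instance $\phi_1\times\id\times\cdots\times\id$) multiplies $\omega_{X_1}\wedge\cdots\wedge\omega_{X_n}$ by the primitive sixth root $\zeta_6$; its $j$-th power multiplies this form by $\zeta_6^{\,j}$, which is nontrivial for $1\le j\le5$, so the action on $H^{n,0}$ is faithful, i.e.\ purely non-symplectic. Because the resolutions furnished by Proposition \ref{maint} and by \cite{CH} are canonical with respect to the toric data along the fixed loci, they may be chosen $\ZZ_6$-equivariantly; hence this action lifts to an order-$6$ automorphism of $Y_n$. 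On $Y_n$ the space $H^{n,0}$ remains one-dimensional, spanned by the lift of the volume form, and the eigenvalue $\zeta_6$ is preserved, so the lifted action is again purely non-symplectic, as required.

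The main obstacle I anticipate is that the residual automorphism $\tau$ on $Y_{n-1}$ is \emph{not} itself of $\eta_1$-type: although $\tau^*$ acts by $\zeta_6$, its fixed locus acquires components of codimension $2$ and $3$ coming from $\Fix(\phi_i)$ on the other factors, so one cannot invoke Proposition \ref{maint} as a black box. The genuine work is therefore to redo the local toric computation for the combined linearisation of $\tau\times\phi_n^5$ along each fixed stratum, checking that the weights fall into the configurations (i)--(iv) of Proposition \ref{maint}, or, on $\Fix(\tau^2)\setminus\Fix(\tau)$ and $\Fix(\tau^3)\setminus\Fix(\tau)$, into the low-codimension order-$3$ and order-$2$ situations resolved in \cite{CH}; one must also confirm that the codimension never exceeds the range that these local models cover and that the separate toric subdivisions over the order-$6$, order-$3$ and order-$2$ loci glue to a single crepant resolution. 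Propagating the \emph{entire} linearisation bookkeeping through the induction, rather than the bare existence of a resolution, is precisely what makes the inductive hypothesis close, and is where I expect the argument to be most delicate.
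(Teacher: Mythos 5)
Your skeleton---the identity $(X_1\times\cdots\times X_n)/G_{6,n}\simeq\bigl(((X_1\times\cdots\times X_{n-1})/G_{6,n-1})\times X_n\bigr)/\ZZ_6$, induction on $n$ with base case Proposition \ref{maint}, and the eigenvalue argument on $H^{n,0}$ for pure non-symplecticity---is exactly the paper's proof. But your closing paragraph concedes that the inductive step is not actually closed: you declare that the ``genuine work'' is to redo the local toric analysis for the combined linearisation of $\tau\times\phi_n^5$ along every fixed stratum, and you leave that work undone. This gap is an artifact of your choice of generator, and the missing idea is that Proposition \ref{maint} was \emph{stated} so as to make the induction close as a black box: its ``Furthermore'' clause asserts that the automorphism induced by $\id\times\eta_2$ on the crepant resolution satisfies \emph{all the assumptions put on} $\eta_2$. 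The inductive hypothesis to carry is therefore not ad hoc ``full linearisation data'' but precisely this clause: $Y_{n-1}$ comes equipped with a lifted order-$6$ automorphism $\sigma$ of $\eta_2$-type, i.e.\ with fixed locus of codimension at most $3$, the listed linearisations, and the stated conditions on $\sigma^2$ and $\sigma^3$.

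Concretely, the cyclic group by which you quotient $Y_{n-1}\times X_n$ is generated not only by $\tau\times\phi_n^5$ (the class of $(1,0,\ldots,0,5)$) but equally by its fifth power $\sigma\times\phi_n$ (the class of $(0,\ldots,0,5,1)$), where $\sigma$ is the certified $\eta_2$-type lift and the fresh factor's automorphism $\phi_n$ plays the $\eta_1$ role, with divisorial fixed locus (as $\phi_6$ on $E_6$ does, and as the normalisation $\eta_1=\phi_1$ in the $n=2$ case indicates is the intended role of the fresh factor). With this presentation the combined weights at fixed points are exactly $(\zeta_6,\zeta_6^5)$, $(\zeta_6,\zeta_6,\zeta_6^4)$, $(\zeta_6,\zeta_6^2,\zeta_6^3)$ and $(\zeta_6,\zeta_6,\zeta_6^2,\zeta_6^2)$---the configurations (i)--(iv) of \ref{maint}---while the strata where only $\sigma^2$ or $\sigma^3$ and the corresponding powers of $\phi_n$ have extra fixed points are covered inside \ref{maint} by the appeal to Propositions 2.1 and 3.1 of \cite{CH}. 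So \ref{maint} applies verbatim, its ``Furthermore'' clause re-certifies the new lifted action on $Y_n$, and the induction closes with no new toric computation; your ``canonical hence equivariant'' hand-wave about lifting is likewise subsumed, since the chart-by-chart lifting of $\id\times\eta_2$ is part of the proof of \ref{maint}. Note that the weights of your generator $\tau\times\phi_n^5$ are the inverses of those above and indeed do not occur in (i)--(iv)---for instance $(\zeta_6^2,\zeta_6^5,\zeta_6^5)$ has age $2$---which is exactly why the step looked delicate to you; replacing the generator by its fifth power dissolves the obstacle entirely.
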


\begin{proof} For $n=2$ this is Proposition \ref{maint}. For an inductive approach notice that
$$(X_{1}\times X_{2} \times \ldots \times X_{n})/G_{6,n}\simeq \left(\left((X_{1}\times X_{2} \times \ldots \times X_{n-1})/G_{6,n-1}\right)\times X_{n}\right)/\mathbb{Z}_{6}.$$ By the inductive hypothesis the quotient $(X_{1}\times X_{2} \times \ldots \times X_{n-1})/G_{6,n-1}$ has a crepant resolution $\widetilde{X}$ and the action of $G_{6, n-1}$ lifts to $\widetilde{X}$ as a purely non-symplectic action of $ \ZZ_{6}.$ Using Proposition \ref{maint} again we conclude the proof. \end{proof}

As a special case we get

\begin{Thm}\label{cor} There exists a crepant resolution $$\tylda{E_{6}^{n}/G_{6,n}}\rightarrow E_{6}^{n}/G_{6,n}.$$ Consequently, $X_{6,n}:=\tylda{E_{6}^{n}/G_{6,n}}$ is an $n$-dimensional Calabi-Yau manifold. \end{Thm}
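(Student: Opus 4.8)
The plan is to deduce Theorem \ref{cor} directly from Proposition \ref{maink} by exhibiting the elliptic curve $E_6$ together with its order-$6$ automorphism $\phi_6$ as a valid input to that iterated construction. Concretely, I would set $X_1 = X_2 = \cdots = X_n = E_6$ and take each automorphism to be $\phi_6(x,y) = (\zeta_6^2 x, -y)$. Since $E_6$ is a curve, the canonical form is $\omega = \dd x / y$, and a one-line computation with the Weierstrass equation $y^2 = x^3 + 1$ shows $\phi_6^*(\dd x/y) = \zeta_6^2\,\dd x / (-y) = -\zeta_3\,\dd x/y = \zeta_6^5\,\dd x/y$, so $\phi_6$ acts on $\omega_{E_6}$ by $\zeta_6^5$, equivalently $\phi_6^5$ acts by $\zeta_6$. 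This matches the normalisation of Proposition \ref{maink}, where each $\phi_i$ is required to act on $\omega_{X_i}$ by $\zeta_6$: I would simply take the generator $\phi_i := \phi_6^5$ for every $i$ (noting $\langle \phi_6 \rangle = \langle \phi_6^5 \rangle = \ZZ_6$, so the group action of $G_{6,n}$ is unchanged), so that $\phi_i^5 = \phi_6$ plays the role of $\eta_2$.

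The substance of the verification is that the single curve $E_6$ with its automorphism meets every hypothesis that Proposition \ref{maink} inherits from Proposition \ref{maint}, namely the linearisation conditions on $\eta_1$ and $\eta_2$ and their powers. First I would compute $\Fix(\phi_6)$, $\Fix(\phi_6^2)$, and $\Fix(\phi_6^3)$ on $E_6$ by solving the fixed-point equations for the maps $(x,y)\mapsto(\zeta_6^{2k}x,(-1)^k y)$: the order-$6$ map fixes only the points where $x=0$ (the two points $(0,\pm1)$) together with the point at infinity, $\phi_6^2 = (\zeta_3 x, y)$ fixes the points with $x=0$ and the $2$-torsion-type flex points, and $\phi_6^3 = (x,-y)$ fixes the $2$-torsion, i.e.\ the three points with $y=0$ together with infinity. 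On a curve every fixed locus is a finite set of points, automatically a disjoint union of smooth submanifolds of codimension one, so the divisor and codimension hypotheses are satisfied trivially on each factor. The one genuinely dimensional phenomenon — the codimension-two and codimension-three local models in conditions (2), (5), (7) of Proposition \ref{maint} — arises only on products, and these are precisely the linearisations handled in cases (i)–(iv) of the proof of Proposition \ref{maint}; so what I must check per factor is only the reading of the eigenvalue on the single normal direction, which I read off from the derivative of $\phi_6^k$ at each fixed point.

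The main obstacle, and the only place where care is needed, is the bookkeeping of which \emph{primitive} power of $\phi_6$ realises each prescribed local eigenvalue, so that the hypotheses of the form ``$\eta_2$ has linearisation $(\zeta_6^{5},1,\ldots,1)$ near a codimension-one component'' and ``$\eta_1$ has linearisation $(\zeta_6,1,\ldots,1)$'' are matched with the correct signs. Because $\phi_6$ acts on the tangent space at $(0,\pm1)$ by multiplication by $\zeta_6^2$ on the $x$-coordinate (the local uniformiser there), while at infinity and at the $2$-torsion points the relevant eigenvalues are powers of $-1$ and $\zeta_3$, I would tabulate, for the fixed points of $\phi_6$, $\phi_6^2$, $\phi_6^3$, the induced action on the normal bundle, and confirm that after passing to the product the four global linearisation types (i)–(iv) are exactly the ones that occur. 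Once this table agrees with the assumptions, the conclusion is immediate: Proposition \ref{maink} produces a crepant resolution $\tylda{E_6^n/G_{6,n}}$, which is therefore Calabi--Yau of dimension $n$, and the induced $\ZZ_6$-action is purely non-symplectic. I expect the inductive engine of Proposition \ref{maink} to do all the real work, with this theorem amounting to the statement that $(E_6,\phi_6)$ is an admissible seed.
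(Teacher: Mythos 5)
Your overall route is exactly the paper's: Theorem \ref{cor} is presented there simply as a special case of Proposition \ref{maink}, with $X_1=\cdots=X_n=E_6$, and your normalisation step (replacing the generator $\phi_6$ by $\phi_6^5$, which acts on $\dd x/y$ by $\zeta_6$ and generates the same copy of $\ZZ_6$, hence the same quotient) is a correct way to match the paper's convention $\phi_i^*(\omega_{X_i})=\zeta_6\,\omega_{X_i}$, with $\phi_i^5=\phi_6$ in the role of $\eta_2$. So the architecture of the argument is sound and coincides with the paper's.

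However, the verification that you yourself identify as the substance of the proof is carried out with a wrong fixed-point table, and the error is not cosmetic. From your own formula $\phi_6^k(x,y)=(\zeta_6^{2k}x,(-1)^k y)$, an affine fixed point of $\phi_6$ would need $x=0$ \emph{and} $y=0$, which is impossible on $y^2=x^3+1$; so $\Fix(\phi_6)$ is the single point at infinity $a$, while $(0,1)$ and $(0,-1)$ are \emph{swapped} by $\phi_6$ (because of $y\mapsto -y$) and lie in $\Fix(\phi_6^2)\setminus\Fix(\phi_6)$. Likewise $\Fix(\phi_6^2)$ does \emph{not} contain the points with $y=0$: since $\phi_6^2$ multiplies $x$ by a primitive cube root of unity, it permutes them in a $3$-cycle, and they are exactly $\Fix(\phi_6^3)\setminus\{a\}$. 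Consequently your sentence ``$\phi_6$ acts on the tangent space at $(0,\pm1)$ by $\zeta_6^2$'' is not even well formed, and if you ran your planned eigenvalue check with this table it would fail: a codimension-one component of $\Fix(\eta_2)$ with eigenvalue $\zeta_6^2=\zeta_3$ is forbidden by assumption (2) before Proposition \ref{maint}, which demands $(\zeta_6^5,1,\ldots,1)$ there, so you would wrongly conclude that $(E_6,\phi_6)$ is inadmissible. With the corrected table everything matches: $\eta_1=\phi_6^5$ has $\Fix(\eta_1)=\{a\}$ with eigenvalue $\zeta_6$ (assumption (1)); the points $(0,\pm1)$ lie in $\Fix(\eta_1^2)\setminus\Fix(\eta_1)$ with eigenvalue $\zeta_3$ (assumption (3)) and in $\Fix(\eta_2^2)\setminus\Fix(\eta_2)$ with eigenvalue $\zeta_3^2$ (assumption (5)); the $y=0$ points lie in $\Fix(\eta_i^3)\setminus\Fix(\eta_i)$ with eigenvalue $-1$ (assumptions (4) and (6)); and the codimension-two and -three clauses of (2), (5), (7) are vacuous on a curve and arise only after taking products, which is precisely what Proposition \ref{maint} propagates through the induction. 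Fix the table and your write-up becomes a complete version of the verification the paper leaves implicit.
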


\begin{Rem} In the constructed crepant resolution of $E_{6}^{n}/G_{6,n}$ we need a suitable toric resolution, as the iterated approach in \cite{CH} leads to a local action of type $(\zeta_{6}^{}, \zeta_{6}^{}, \zeta_{6}^{5}, \zeta_{6}^{5}),$ which has no junior elements and so the quotients does not admit any crepant resolution. 
	
Also, we were not able to use a factorisation of an action of order $6$ into an action of order $2$ and $3.$
Indeed the second power of the action (iv) in the proof of Proposition \ref{maint} is equal to $(\zeta_{3}, \zeta_{3}, \zeta_{3}^{2}, \zeta_{3}^{2}, 1, \ldots),$ which again has no junior element and consequently no crepant resolution. The third power of the action (iii) equals $(-1,1,-1,1, \ldots)$ and the factorization into an action of order 2 followed by an action of order 3 gives inverse local chart $\displaystyle \left(\frac{x^3}{z}, \frac{xy}{z}, \frac{z^3}{x^3} \right)$. In this chart the action of $\eta_{1}$ lifts to the resolution as $(-1, \zeta_{6}, -1)$. In the next step we get an action $(\zeta_{6}, \zeta_{6}^3, \zeta_{6}^5, \zeta_{6}^5)$ with the third power equal to $(-1, -1, -1, -1), $ which clearly has no crepant resolution.
\end{Rem}
	
	Repeating the argument of R. Laterveer and C. Vial given in \cite{LV} we obtain the following:
	
	\begin{Cor}\label{cor2} The Calabi-Yau manifold $X_{6,n}$ satisfies conjecture \ref{CVC}. \end{Cor}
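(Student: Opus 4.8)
The plan is to show that $X_{6,n}$ satisfies all the hypotheses that the argument of \cite{LV} requires, and then to repeat that argument. Two inputs are needed. First, $X_{6,n}$ must be a Calabi--Yau $n$-fold, so that $h^{n,0}(X_{6,n})=1$ and $h^{j,0}(X_{6,n})=0$ for $0<j<n$; this is exactly the conclusion of Theorem \ref{cor}, and it places $X_{6,n}$ among the varieties to which Conjecture \ref{CVC} applies. Second, the Chow motive of $X_{6,n}$ must be finite-dimensional in the sense of Kimura--O'Sullivan, and in fact of abelian type, so that its transcendental summand is controlled by the shape of the Hodge diamond. The whole proof reduces to establishing this second input.

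To that end I would argue that, by Theorem \ref{cor}, $X_{6,n}$ is a crepant resolution of $E_{6}^{n}/G_{6,n}$, where $E_{6}^{n}$ is an abelian variety. With rational coefficients the motive of the quotient is the $G_{6,n}$-invariant summand of $\mathfrak{h}(E_{6}^{n})$, while the crepant resolution contributes in addition only Tate twists of the motives of the fixed loci of the various powers of the acting automorphism; these fixed loci are themselves products of copies of $E_{6}$ and of points, hence again of abelian type. Thus $\mathfrak{h}(X_{6,n})$ is a direct summand of a sum of Tate twists of motives of abelian varieties, and is therefore finite-dimensional in the sense of Kimura--O'Sullivan. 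Granting this, the argument of \cite{LV} runs unchanged: the Hodge conditions $h^{j,0}=0$ for $0<j<n$ and $h^{n,0}=1$ force the transcendental motive $t(X_{6,n})$ to carry a one-dimensional top piece, and finite-dimensionality then identifies the obstruction to $a\times a'=(-1)^n a'\times a$ with a class that the involution exchanging the two factors of $X_{6,n}\times X_{6,n}$ multiplies by $(-1)^n$; the relation for numerically trivial zero-cycles follows.

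The hard part will be the verification, for $d=6$, that the toric resolutions introduced in Proposition \ref{maint} --- which were forced on us by the local models $\tfrac{1}{6}(1,1,4)$ and $\tfrac{1}{6}(1,2,3)$ and which replace the iterated blow-ups of \cite{CH} --- do not spoil this structure. Concretely, I would check that each exceptional divisor appearing in $\tylda{E_{6}^{n}/G_{6,n}}$ is fibred by toric varieties over the corresponding fixed locus, so that its motive splits as a sum of Tate twists of the (abelian-type) motive of that locus; this both preserves finite-dimensionality and confirms that the Hodge diamond has nonzero entries only on the diagonals, which is exactly the feature exploited in \cite{LV}. Once this local analysis is in place, no input beyond \cite{LV} is required and the corollary follows.
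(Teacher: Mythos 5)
Your proposal follows the paper's route: the paper proves this corollary with no new argument at all, simply invoking Theorem \ref{cor} and ``repeating the argument of \cite{LV}'', exactly as you do, except that the diagonal shape of the Hodge diamond is supplied there by the Chen--Ruan computation of Theorem \ref{blabla} rather than by your proposed local analysis of the toric charts. Your reconstruction of the inputs to \cite{LV} --- the Calabi--Yau condition from Theorem \ref{cor}, the finite-dimensional abelian-type motive coming from the crepant resolution of $E_{6}^{n}/G_{6,n}$ with exceptional loci fibred in toric pieces over fixed loci that are products of copies of $E_{6}$ and points, and the concentration of Hodge numbers on the diagonals --- is sound and matches what the paper takes for granted.
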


\section{Hodge numbers of $X_{d,n}$}

In the present section we shall compute the Hodge diamond of generalized Kummer type Calabi-Yau manifolds.

To simplify computations we will use the following Poincar{\'e} polynomial 
$$F_{V}(X,Y):=\sum_{p,q=0}^{n}h^{p,q}(V)X^{p}Y^{q} \in \ZZ[X,Y]$$
associated to any projective manifold $V$ of dimension $n.$ 
Let us also denote by $\{F_{V}(X,Y)\}[X^{p}Y^{q}]$ the coefficient of $X^{p}Y^{q}$ in $F_{V}(X,Y)$.

\begin{Thm}\label{blabla} The Hodge number $h^{p,q}(X_{d,n})=\left\{F_{X_{d,n}}(X,Y)\right\}[X^{p}Y^{q}]$ of the manifold $X_{d,n}$ is equal to
	\begin{equation*}
	\resizebox{\linewidth}{!}{ 
		$\begin{cases}
		\left\{(X+Y)^{n}+\left(XY+4\sqrt{XY}+1\right)^{n}\right\}[X^{p}Y^{q}] & \textup{if } d=2, \\
		
		\left\{X^{n}+Y^{n}+\left(1+\sqrt[3]{XY}\right)^{3n}\right\}[X^{p}Y^{q}] & \textup{if } d=3, \\
		
		\begin{aligned}[b]
		&\Bigg\{X^{n}+Y^{n}+\left(1+XY+2\sqrt[4]{XY}+3\sqrt[4]{(XY)^2}+2\sqrt[4]{(XY)^3}\right)^{n}+\left(\sqrt[4]{(XY)^2}\right)^{n}
		\Bigg\}[X^{p}Y^{q}]
		\end{aligned}& \textup{if } d=4,\\
		
		\begin{aligned}[b]&\Bigg\{X^{n}+Y^{n}+\left(1+XY+\sqrt[6]{XY}+2\sqrt[6]{(XY)^{2}}+2\sqrt[6]{(XY)^{3}}+2\sqrt[6]{(XY)^{4}} +\sqrt[6]{(XY)^{5}}\right)^{n}+\\&+2\cdot (XY)^{\frac{n}{2}}+ \left(\sqrt[6]{(XY)^2}+\sqrt[6]{(XY)^4}\right)^{n}\Bigg\}[X^{p}Y^{q}]\end{aligned}&\textup{if } d=6.
		\end{cases}$}
	\end{equation*}
\end{Thm}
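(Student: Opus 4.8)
The plan is to compute the Hodge numbers through the Chen--Ruan (orbifold) cohomology of the global quotient orbifold $[E_d^n/G_{d,n}]$. The first step is the crepant resolution principle: since Theorems \ref{ch} and \ref{cor} furnish crepant resolutions $X_{d,n}\to E_d^n/G_{d,n}$ of a variety with Gorenstein quotient singularities, the Hodge numbers of the resolution agree with the orbifold ones, $h^{p,q}(X_{d,n})=h^{p,q}_{\mathrm{CR}}([E_d^n/G_{d,n}])$. This is the equality of stringy and ordinary Hodge numbers for crepant resolutions, which in the abelian-quotient case follows from the (motivic) McKay correspondence. It therefore suffices to compute the Chen--Ruan Poincar\'e polynomial.

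For an abelian group $G=G_{d,n}$ acting on $M=E_d^n$ the Chen--Ruan cohomology splits as $\bigoplus_{g\in G}\big(H^{*-2\age(g)}(M^g)\big)^{G}$, so in terms of the two-variable Poincar\'e polynomial
\[
F_{X_{d,n}}(X,Y)=\sum_{g\in G_{d,n}}(XY)^{\age(g)}\,F_{(H^*(M^g))^{G_{d,n}}}(X,Y),
\]
the fractional ages being recorded by the roots $\sqrt[d]{XY}$ in the statement. Since $g=(m_1,\dots,m_n)$ acts factorwise, $M^g=\prod_i F_i$ with $F_i=E_d$ when $m_i=0$ and $F_i=\Fix(\phi_d^{m_i})$ a finite set otherwise, while $\age(g)=\tfrac1d\sum_i(c_dm_i\bmod d)$, where $\zeta_d^{c_d}$ is the eigenvalue of $\phi_d$ on $H^{1,0}(E_d)$; from the given Weierstrass models one reads off $c_2=c_3=c_4=1$ and $c_6=5$. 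I would first tabulate, for each $d$, the three pieces of local data: the graded character of $\phi_d$ on $H^*(E_d)$, the cardinalities $|\Fix(\phi_d^m)|$, and the permutation traces $\operatorname{tr}(\phi_d^k\mid\CC[\Fix(\phi_d^m)])=|\Fix(\phi_d^m)\cap\Fix(\phi_d^k)|$.

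The key manoeuvre is to disentangle the two $\ZZ_d$-linear constraints --- the defining relation $\sum m_i\equiv0$ of $G_{d,n}$ for the sector index $g$, and the relation $\sum k_i\equiv0$ coming from averaging $H^*(M^g)$ over $h=(k_i)\in G_{d,n}$ --- by a double roots-of-unity filter. Writing the $G$-invariant projection as $\tfrac1{|G|}\sum_h$, the age shift and the graded traces both factor over the $n$ tensor factors; inserting $\mathbf 1[\sum m_i\equiv0]=\tfrac1d\sum_a\zeta_d^{a\sum m_i}$ and its analogue in $k$ gives
\[
F_{X_{d,n}}(X,Y)=\frac1{d^{\,n+1}}\sum_{a,b=0}^{d-1}\Psi_{a,b}(X,Y)^n,\qquad \Psi_{a,b}=\sum_{m,k\in\ZZ_d}\zeta_d^{am+bk}\,\psi(m,k),
\]
where $\psi(m,k)=(XY)^{(c_dm\bmod d)/d}\operatorname{tr}_{X,Y}(\phi_d^k\mid H^*(F^{(m)}))$ is the local weight. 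Evaluating the $d^2$ polynomials $\Psi_{a,b}$ is then finite: the $b$ for which the $m=0$ block survives but the character sum annihilates the $X$- and $Y$-monomials give, after $n$-th powers, the diagonal summand $\big(1+XY+\cdots\big)^n$ (the value $a=0$ produces it, the remaining $a$ produce the same polynomial with $w\mapsto\zeta_d^aw$ for $w=\sqrt[d]{XY}$, so averaging over $a$ is exactly the filter selecting integral $XY$-powers); the $b$ leaving only the $X$- or $Y$-block give the isolated $X^n,Y^n$ (and $(X+Y)^n$ when $d=2$, because there $\chi_{c_2}=\chi_{-c_2}$); and the intermediate $b$ produce the extra terms. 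Finally, as $\age\in\tfrac1d\ZZ$, only monomials of integral total $XY$-exponent are genuine Hodge numbers, so extracting $\{\,\cdot\,\}[X^pY^q]$ with $p,q\in\ZZ$ automatically discards the auxiliary $\pm\sqrt[d]{XY}$ cross-terms --- this is precisely what validates the compact closed forms.

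The main obstacle is $d=6$. There the loci $\Fix(\phi_6^m)$ genuinely depend on $m$ (the chain $\langle\phi_6\rangle\supset\langle\phi_6^2\rangle,\langle\phi_6^3\rangle$ has distinct, only partially overlapping fixed-point sets, reflected in the linearisation assumptions (5)--(7)), so the traces $\operatorname{tr}(\phi_6^k\mid\CC[\Fix(\phi_6^m)])$ must be computed case by case; it is exactly these mismatches that create the genuinely new summands $2(XY)^{n/2}$ and $\big(\sqrt[6]{(XY)^2}+\sqrt[6]{(XY)^4}\big)^n$ (arising from the intermediate classes $b=2,3,4$), with the analogous $(\sqrt[4]{(XY)^2})^n$ for $d=4$ coming from $b=2$. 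Pinning down the ages --- equivalently the local linearisations listed before Proposition \ref{maint} --- and verifying that the intermediate-$b$ contributions are precisely the stated ones is the delicate bookkeeping; the cases $d=2,3$, where every nonzero power of $\phi_d$ shares the same fixed locus, provide a clean consistency check.
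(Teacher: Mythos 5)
Your proposal is correct and shares the paper's skeleton --- pass to the crepant resolution, invoke the orbifold formula \eqref{wzor}, and sum over sectors $g\in G_{d,n}$ weighted by $(XY)^{\age(g)}$ --- but it executes the central combinatorial step by a genuinely different device. The paper parametrizes sectors by coordinate multiplicities (e.g.\ $g^{u,v,w,s,t}$ for $d=6$), counts \emph{orbits} of $G_{d,n}$ on the components of $\Fix(g)$ via explicit normal forms ($2^{v+w+s}$ orbits generically), handles the exceptional sectors $w=n$ and $v+s=n$ by hand (the $3$-cycle on $\{d,e,f\}$ and the $2$-cycle on $\{b,c\}$ give the extra $+2$ and $+1$ orbits, whence $2(XY)^{n/2}$ and $(\sqrt[6]{(XY)^2}+\sqrt[6]{(XY)^4})^n$), and resums multinomially using Lemma \ref{lemat} for the positive-dimensional part. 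Your double roots-of-unity filter replaces all of this orbit bookkeeping by traces, and your normalization $d^{-(n+1)}$ is right since $|G_{d,n}|=d^{n-1}$. I checked it reproduces the paper's answers: for $d=2$ one gets $\Psi_{0,0}=2(1+XY+4\sqrt{XY})$, $\Psi_{1,0}=2(1+XY-4\sqrt{XY})$, $\Psi_{0,1}=\Psi_{1,1}=2(X+Y)$, which is exactly the paper's symmetrized form $(X+Y)^n+\tfrac12\bigl((1+XY+4\sqrt{XY})^n+(1+XY-4\sqrt{XY})^n\bigr)$; for $d=6$ the permutation characters of $\CC[\Fix(\phi_6^m)]$ are $2\chi_0+\chi_3$ for $m=2,4$ and $2\chi_0+\chi_2+\chi_4$ for $m=3$, so $b=3$ yields $(\alpha^2+\alpha^4)^n$ with $\alpha=\sqrt[6]{XY}$, each of $b=2,4$ yields $(XY)^{n/2}$ (killed by the $a$-filter for odd $n$, consistently with the $[X^pY^q]$ extraction), and $b=\pm1$ yield $X^n,Y^n$ since $\chi_{\pm1}$ occurs in no $\CC[\Fix(\phi_6^m)]$; the cases $d=3,4$ check identically. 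Two caveats: the trace tables you defer are the actual content (finite and routine, but your text stops short of evaluating them), and your age convention $c_6=5$ differs from the paper's linearisation $(\zeta_6^{m_i})$ --- the two agree after substituting $g\mapsto g^{-1}$ in the sector sum, and your implicit claim that the age depends only on the $m_i$ is justified because the local eigenvalue at every fixed point equals the eigenvalue on the global $1$-form. What your route buys is uniformity: the exceptional sectors fall out automatically from the $b\neq0$ classes and the filter explains why non-integral cross terms may be dropped; what the paper's route buys is an explicit geometric identification of the surviving components of the fixed loci.
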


Substituting appropriate roots of unity into the above formulas we get:

\begin{Cor} The Euler characteristic of manifolds $X_{d,n}$ equals
	$$e\left(X_{d}^{n}\right)=\begin{cases}\frac{1}{2}(6^{n}+3(-2)^{n})& \textup{if } d=2, \\ \frac{1}{3}\left(8^{n}+8(-1)^{n}\right)& \textup{if } d=3,\\ \tfrac{1}{4}(9^n+3)+3(-1)^n &\textup{if } d=4, \\ 
	\begin{aligned}[b]&\tfrac{1}{6}\left(10^n+3\cdot 2^n+8\right)+4(-1)^n\end{aligned} &\textup{if } d=6.  \end{cases}$$
\end{Cor}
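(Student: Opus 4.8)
The plan is to identify the Hodge numbers of the crepant resolution $X_{d,n}=\tylda{E_d^n/G_{d,n}}$ with the Chen--Ruan (orbifold) Hodge numbers of the quotient $[E_d^n/G_{d,n}]$. Since $G_{d,n}$ preserves $\omega_{E_d^n}$, the quotient has Gorenstein canonical singularities, and the crepant resolutions furnished by Theorems \ref{ch} and \ref{cor} realize the orbifold cohomology; concretely $F_{X_{d,n}}(X,Y)=F^{\mathrm{CR}}_{[E_d^n/G_{d,n}]}(X,Y)$ by the cohomological McKay correspondence for global abelian quotients. It therefore suffices to compute the orbifold Poincar\'e polynomial sector by sector.

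For the abelian group $G_{d,n}$ the Chen--Ruan polynomial is the sum over $g\in G_{d,n}$, and over the connected components $Z\subset\Fix(g)$, of $(XY)^{\age(g,Z)}$ times the Poincar\'e polynomial of the $G_{d,n}$-invariant part of $H^*(Z)$. I would exploit the product structure: for $g=(m_1,\dots,m_n)$ one has $\Fix(g)=\prod_i \Fix_{E_d}(\phi_d^{m_i})$, the age is additive over the factors, and the residual action factorizes, so everything reduces to single-curve data. On one curve the slot $m=0$ contributes all of $E_d$ (untwisted, $\age=0$), while $m\neq 0$ contributes a finite fixed set with local tangent weight a power of $\zeta_d$ determined by $\phi_d^*\omega$, hence a fractional $\age\in\tfrac1d\ZZ$; counting these fixed points ($4$ for the involution, $3$ for an order-$3$ element, $2$ for $\phi_4,\phi_6$) together with how the residual $\langle\phi_d\rangle$ permutes them assembles the single-curve orbifold polynomial $P_d(\sqrt[d]{XY})$, e.g. $P_3=1+XY+3\sqrt[3]{XY}+3\sqrt[3]{(XY)^2}=(1+\sqrt[3]{XY})^3$, and likewise one reads off the bases $1+XY+4\sqrt{XY}$, $1+XY+2\sqrt[4]{XY}+3\sqrt[4]{(XY)^2}+2\sqrt[4]{(XY)^3}$ and $1+XY+\sqrt[6]{XY}+2\sqrt[6]{(XY)^2}+2\sqrt[6]{(XY)^3}+2\sqrt[6]{(XY)^4}+\sqrt[6]{(XY)^5}$.

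The crux is the passage from the unconstrained product group $\ZZ_d^n$ to $G_{d,n}=\ker(\textstyle\sum)$. I would encode each group element in a formal $d$-th root-of-unity variable and impose $\sum m_i\equiv 0\ (\mathrm{mod}\ d)$ by a character sum; the decisive observation is that this constraint forces the total age to be an integer, so that precisely the integer powers of $XY$ in $P_d(\sqrt[d]{XY})^n$ survive as honest Hodge classes --- which is exactly what the extraction $\{\,\cdot\,\}[X^pY^q]$ performs. This yields the main summand $(\,\cdots\,)^n$. Two further pieces must then be added. First come the diagonal untwisted classes $\alpha^{\otimes n}$ and $\beta^{\otimes n}$, the top (anti)holomorphic forms of the factors, which are $G_{d,n}$-invariant but not $\ZZ_d^n$-invariant and contribute $X^n+Y^n$ for $d=3,4,6$ and the enlargement to $(X+Y)^n$ for $d=2$. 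Second come the extra invariants in the sectors of the non-maximal-order elements (the order $2$ and $3$ elements when $d=6$, the order $2$ element when $d=4$), whose residual orbits split differently for the global $G_{d,n}$ than factor by factor; these assemble into the correction terms $2\,(XY)^{n/2}$, $(\sqrt[6]{(XY)^2}+\sqrt[6]{(XY)^4})^n$ and $(\sqrt[4]{(XY)^2})^n$.

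I expect the main obstacle to be exactly this last bookkeeping: determining the local linearizations and ages of the intermediate, non-generic elements of $G_{d,n}$ along their positive-dimensional fixed strata, and tracking which classes on those strata are invariant under the global $G_{d,n}$ rather than factorwise --- a miscount here alters precisely the delicate correction terms and nothing else. Once the intermediate sectors are enumerated and the integrality of the total age on each $G_{d,n}$-sector is verified (so that every contribution lands in an integer bidegree), collecting the binomial sums factor by factor produces the four closed forms and completes the proof.
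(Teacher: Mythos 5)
Your sector-by-sector Chen--Ruan computation is, in outline, the paper's own proof of Theorem \ref{blabla}: the same orbifold formula summed over $g\in G_{d,n}$, the same reduction to per-curve data with the integrality of the total age on $G_{d,n}$ justifying the coefficient extraction $\{\,\cdot\,\}[X^pY^q]$, the same extra summands $X^n+Y^n$ (resp.\ $(X+Y)^n$) coming from Lemma \ref{lemat}, and the same correction terms from sectors whose orbit count under the global group is not multiplicative. But the statement you were asked to prove is the Euler characteristic corollary, and your argument stops one step short of it: nowhere do you pass from the Hodge polynomials to the numbers, i.e.\ you never perform $e(X_{d,n})=\sum_{p,q}(-1)^{p+q}h^{p,q}=F_{X_{d,n}}(-1,-1)$, and none of the closed forms $\tfrac12(6^n+3(-2)^n)$, $\tfrac13(8^n+8(-1)^n)$, $\tfrac14(9^n+3)+3(-1)^n$, $\tfrac16(10^n+3\cdot 2^n+8)+4(-1)^n$ is derived. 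This last step is not purely mechanical: the formulas of Theorem \ref{blabla} contain fractional powers $\sqrt[d]{XY}$, only the integer-power coefficients are Hodge numbers, and naive substitution $X=Y=-1$ is therefore ill-defined (e.g.\ $\sqrt{XY}=\pm 1$). The paper's proof of the corollary consists precisely of first symmetrizing over roots of unity so that only integer powers of $XY$ survive --- for $d=2$ replacing $(1+XY+4\sqrt{XY})^{n}$ by $\tfrac12\bigl((1+XY+4\sqrt{XY})^{n}+(1+XY-4\sqrt{XY})^{n}\bigr)$, for $d=3$ averaging $(1+\zeta_3^{k}\sqrt[3]{XY})^{3n}$ over $k$, and analogously with $d$-th roots of unity for $d=4,6$ --- and only then evaluating at $X=Y=-1$; this averaging is exactly where the lower-order constants come from (note, for instance, that the term $2(XY)^{n/2}$ contributes $1+(-1)^n$ to $e(X_{6,n})$, not $2$, since for odd $n$ it is a fractional power and must be discarded). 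Without this step your proposal proves (at best) the theorem, not the corollary.

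Two smaller points. Your fixed-point count for $d=6$ is wrong: $\phi_6$ and $\phi_6^5$ have a single fixed point, since $\deg(1-\phi_6)=|1-\zeta_6|^2=1$, not two as you assert alongside $\phi_4$; this contradicts the base polynomial you then quote, which correctly carries coefficient $1$ on $\sqrt[6]{XY}$ and $\sqrt[6]{(XY)^5}$. And you explicitly defer the ``last bookkeeping'' of orbits in the intermediate sectors, yet that bookkeeping --- the counts $2^{v+w+s}$ in general, $2^{v+w+s}+2$ when $w=n$, and $2^{v+w+s}+1$ when $v+s=n$ --- is the entire substance behind the correction terms $2(XY)^{n/2}$ and $\bigl(\sqrt[6]{(XY)^2}+\sqrt[6]{(XY)^4}\bigr)^{n}$, so as a proof of the Hodge formulas your text is a plan rather than an argument. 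Since Theorem \ref{blabla} is proved immediately before the corollary, the economical route is simply to cite it and then carry out the root-of-unity evaluation described above; that evaluation is the one piece your proposal is missing.
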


\begin{Rem} Theorem \ref{blabla} yields $$h^{1,n-1}(X_{2,n})=h^{1}(\mathcal{T}_{X_{2,n}})=n$$ for $d=2$ and $n>2.$ Therefore, by the Tian-Todorov unobstructedness theorem the deformation space of $X_{2,n}$ has dimension $n.$ On the other hand our construction involves $n$ independent elliptic curves, so it depends on $n$ parameters. Consequently the family $X_{2,n}$ is locally complete.
	
	If $n>2$ and $d=3,4,6$ we get $$h^{1,n-1}(X_{d,n})=0,$$ so the Calabi-Yau manifold $X_{d,n}$ is rigid.   \end{Rem}

\subsection{Preliminaries}

Let $E$ be an elliptic curve. Combining K{\"u}nneth's formula with a standard induction argument we see that \begin{equation*}\label{en} h^{p,q}(E^{n})=\binom{n}{p}\binom{n}{q},\quad \textup{for } 1\leq p,q\leq n. \end{equation*} 

We begin with the following:

\begin{Lem}\label{lemat} For any $1\leq p,q\leq n,$ the following equalities hold
	\begin{equation*}
	\begin{aligned}
	&\dim H^{p,q}(E_{d}^n)^{G_{d,n}} =
	\begin{cases}
	\binom{n}{p}\;&\textup{if } p=q\; \textup{or } p+q=n\; \textup{but } n\neq 2p,\\
	2\binom{n}{p}\;&\textup{if } p=q\; \textup{and } p+q=n,\\
	\;0\;&\textup{otherwise,}
	\end{cases} && \quad\textup{for } d=2. \\
	&\dim H^{p,q}(E_{d}^n)^{G_{d,n}} =
	\begin{cases}
	\binom{n}{p}\;&\textup{if } p=q\; \textup{or } (p,q)\in\{(0,n), (n,0)\},\\
	\;0\;&\textup{otherwise,}
	\end{cases} && \quad\textup{for } d=3,4,6. 
	\end{aligned}
	\end{equation*} 
\end{Lem}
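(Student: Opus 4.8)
The plan is to decompose $H^{p,q}(E_d^n)$ by the Künneth formula and read off the $G_{d,n}$-action one tensor factor at a time. Writing $\omega$ for a generator of $H^{1,0}(E_d)$ and $\bar\omega$ for its conjugate, the automorphism $\phi_d$ acts on $H^{1,0}(E_d)=\CC\omega$ by a primitive $d$-th root of unity $\lambda$ (this is precisely the relation $\phi_d^{*}\omega=\zeta_d^{\pm1}\omega$, which one checks directly from the Weierstrass models and the formulas for $\phi_d$), hence on $H^{0,1}(E_d)=\CC\bar\omega$ by $\lambda^{-1}$, and trivially on $H^{0,0}(E_d)$ and on $H^{1,1}(E_d)=\CC(\omega\wedge\bar\omega)$. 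Consequently, on a one-dimensional summand $H^{p_1,q_1}(E_d)\otimes\cdots\otimes H^{p_n,q_n}(E_d)$ of $H^{p,q}(E_d^n)$, a group element $(m_1,\dots,m_n)\in G_{d,n}$ acts by the scalar $\lambda^{\sum_i m_i r_i}$, where $r_i:=p_i-q_i\in\{-1,0,1\}$ and $(p_i,q_i)$ runs over $\{(0,0),(1,0),(0,1),(1,1)\}$.

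First I would show that such a summand is $G_{d,n}$-invariant precisely when $r_1\equiv r_2\equiv\cdots\equiv r_n \pmod d$. Since $G_{d,n}$ is generated by the vectors $e_i-e_n$ for $1\le i\le n-1$, the character $(m_i)\mapsto\lambda^{\sum_i m_i r_i}$ is trivial if and only if $\lambda^{r_i-r_n}=1$ for every $i$, that is, $r_i\equiv r_n\pmod d$. This reduces the whole computation to a congruence bookkeeping on the integers $r_i\in\{-1,0,1\}$.

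Then I would finish by a short case analysis together with counting. For $d=3,4,6$ the values $-1,0,1$ are pairwise distinct modulo $d$, so invariance forces all $r_i$ to be equal: either all $r_i=0$ (each factor is $(0,0)$ or $(1,1)$, giving the diagonal $p=q$ with multiplicity $\binom{n}{p}$ according to which $p$ factors are $(1,1)$), or all $r_i=1$ (all factors $(1,0)$, the single class in $H^{n,0}$), or all $r_i=-1$ (all factors $(0,1)$, the class in $H^{0,n}$); this reproduces the stated formula. The only genuine subtlety is the case $d=2$, where $1\equiv-1\pmod 2$ and invariance instead allows the $r_i$ to be either all $0$ or all odd. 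The ``all $0$'' family again gives the diagonal contribution $\binom{n}{p}$ for $p=q$, whereas the ``all odd'' family lets each factor independently be $(1,0)$ or $(0,1)$, so choosing $p$ of them to be $(1,0)$ yields $\binom{n}{p}$ invariant classes on the anti-diagonal $p+q=n$. These two families are disjoint except at $p=q=n/2$, where they overlap and the dimensions add to $2\binom{n}{p}$; managing this overlap at $n=2p$ is exactly the point that distinguishes $d=2$ from $d=3,4,6$, and it is the one step I would write out most carefully.
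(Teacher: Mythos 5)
Your proposal is correct and takes essentially the same approach as the paper: the paper's basis of monomial forms $\dd z_{i_1}\wedge\ldots\wedge\dd z_{i_p}\wedge\dd\bar{z}_{j_1}\wedge\ldots\wedge\dd\bar{z}_{j_q}$ is exactly your K\"unneth eigenbasis, and its invariance check against explicit group elements (such as the order-two element with $-1$ in the $k$-th and $l$-th slots) is the same computation as your character condition $r_i\equiv r_n\pmod d$ on the generators $e_i-e_n$. Your uniform bookkeeping with $r_i=p_i-q_i$, including the overlap at $p=q=n/2$ for $d=2$, merely writes out in detail what the paper compresses into ``in a similar way we prove the formula for $d=3,4,6$.''
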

\begin{proof} The Hodge vector space $H^{p,q}(E_{d}^{n})$ is generated by differential forms of the following shape $$\dd z_{i_{1}}\wedge \dd z_{i_{2}}\wedge \ldots \wedge \dd z_{i_{p}}\wedge \dd \bar{z_{j_{1}}}\wedge \dd \bar{z_{j_{2}}}\wedge \ldots \wedge\dd \bar{z_{j_{q}}}.$$ 
	
	In the case of $d=2,$ we see that such $(p,q)$-form is $G_{2,n}$ invariant if and only if 
	\begin{itemize}
		\item $\{i_{1}, i_{2}, \ldots, i_{p}, j_{1}, j_{2}, \ldots, j_{q}\}=\{1,2,\ldots, n\}$ or
		\item $\{i_{1}, i_{2}, \ldots, i_{p}\}=\{j_{1}, j_{2}, \ldots, j_{q}\}$.
	\end{itemize} 
	Each of the cases provides $\binom{n}{p}$ choices. 
	
	Suppose that there exist indices $k\in \{i_{1}, i_{2}, \ldots, i_{p}\}\setminus \{j_{1}, j_{2}, \ldots, j_{q}\},$ and $l\in \{1,2,\ldots, n\}\setminus \{i_{1}, i_{2}, \ldots, i_{p}, j_{1}, j_{2}, \ldots, j_{q}\}$ and without loss of generality assume that $k<l$. Then given $(p,q)$-form is not invariant under $$\big(1,\ldots, \underbrace{-1}_{k-\textup{th place}}, \ldots \underbrace{-1}_{l-\textup{th place}},\ldots 1\big).$$ 
	
	In a similar way we prove the formula for $d=3,4,6.$
\end{proof}

\subsection{$\mathbb{Z}/6\mathbb{Z}$ action}

From the orbifold formula we get  
\begin{equation}\label{wzor} H^{i,j}(X_{6,n}):=\bigoplus_{g\in G_{6,n}}\left(\bigoplus_{U\in \Lambda(g)} H^{i- \age(g),\; j-\age(g)}(U)\right)^{G_{6,n}},\end{equation}where, $\Lambda(g)$ denotes the set of irreducible connected components of the set fixed by $g\in G_{6,n}$ and $\age(g)$ is the age of the matrix of linearised action of $g$ near a point of $U$ (e.g. \cite{TY}).

Now consider an element $$g^{u,v,w,s,t}:=\big(\underbrace{1, \ldots, 1}_{u}, \underbrace{2, \ldots, 2}_{v},\underbrace{3, \ldots, 3}_{w}, \underbrace{4, \ldots, 4}_{s}, \underbrace{5, \ldots, 5}_{t}, \underbrace{0,\ldots, 0}_{\ell:=n-u-v-w-s-t}\big)\in G_{6,n},$$ where $6\mid u+2v+3w+4s+5t$, which corresponds to an automorphism of $E_{6}^{n}$ such that the local action near a component of the fixed locus linearizes to $$\big(\underbrace{\zeta_{6}^{}, \ldots, \zeta_{6}^{}}_{u}, \underbrace{\zeta_{6}^{2}, \ldots, \zeta_{6}^{2}}_{v},\underbrace{\zeta_{6}^{3}, \ldots, \zeta_{6}^{3}}_{w}, \underbrace{\zeta_{6}^{4}, \ldots, \zeta_{6}^{4}}_{s}, \underbrace{\zeta_{6}^{5}, \ldots, \zeta_{6}^{5}}_{t}, \underbrace{1,\ldots, 1}_{\ell}\big).$$ Then $\displaystyle \age\left(g^{u,v,w,s,t} \right)=\tfrac{u+2v+3w+4s+5t}{6}.$


\begin{itemize}
	\item The action of $\phi_{6}^{}$ and $\phi_{6}^{5}$ have one fixed point: $a$, which stands for the infinity point of $E_{6}$.
	\item The action of $\phi_{6}^{2}$ and $\phi_{6}^{4}$ have three fixed points: $$a,\; b:=(0,1),\; c:=(0,-1)$$ from which only $a$ is invariant under $\phi_{6}$ and the remaining two form a 2-cycle.
	\item The action of $\phi_{6}^{3}$ has four fixed points: $$a,\; d:=(1,0),\; e:=(\zeta_{3},0),\; f:=(\zeta_{3}^2,0)$$ from which only $a$ is invariant under $\phi_{6}$ and the remaining three form a 3-cycle.
\end{itemize}


If $\ell=n$ i.e. $(u,v,w,s,t)= (0,0,0,0,0)$, then $\Fix(g^{u,v,w,s,t})=E_6^n$ and according to \ref{lemat} the contribution to Poincar{\'e} polynomial corresponding to $g^{u,v,w,s,t}$ is equal to $$X^n+Y^n +(1+XY)^n.$$ 

We shall study orbits of the action of $G_{6,n}$ on the set of irreducible connected components $\Lambda(g)$ of $\Fix(g)$ or equivalently on the finite set $F(g):=\Fix(\phi_{6}^{g_{1}})\times \ldots \times \Fix(\phi_{6}^{g_{n-\ell}})$, where $g_{i}$ denotes $i$-th coordinate of $g.$

If $u\neq 0$ or $t\neq 0$ or $\ell \neq 0$ then fixing $i\in \{1,2,\ldots, n\}$ such that $g_{i}\in \{1, \zeta_{6},\zeta_{6}^5\}$ and taking the element $h=(h_{1}, h_{2}, \ldots, h_{n})\in G_{6,n},$ where $$h_{k}:= \begin{cases} \zeta_{6}^{5} & \textup{if } k=i,\\
\zeta_{6}^{} & \textup{if } k=j,\\
1 & \textup{if } k\not\in \{i,j\},  \end{cases}\quad \textup{for } j\in \{1,2,\ldots, n\}\setminus \{i\},$$
we see that each orbit of the action contains a unique element $x:=(x_{1}, x_{2}, \ldots, x_{n-\ell})$ with $x_{i}\in \{a,d\}.$ The same holds true if ($v\neq 0$ or $s\neq 0$) and $w\neq 0$. Consequently the number of orbits equals $2^{v+w+s}$ unless $w=n$ or $v+s=n.$

On the other hand in the case $w=n$ each orbit of the action contains either a unique element $x=(x_{1}, x_{2}, \ldots, x_{n})$ with $x_{i}\in \{a, d \}$ or one of the following two elements: $(d,d,\ldots, d, e)$ or $(d,d,\ldots, d, f).$ Therefore we get $2^{v+w+s}+2$ orbits in this situation.

Similar arguments show that in the case $v+s=n$ we get $2^{v+w+s}+1$ orbits. As 
\begin{align*}\left(\bigoplus_{U\in \Lambda(g)} H^{i- \age(g),\; j-\age(g)}(U)\right)^{G_{6,n}}&=\left(\bigoplus_{U\in \Lambda(g)} H^{i- \age(g),\; j-\age(g)}(U)^{G_{6,\ell}}\right)^{G_{6,n}}\end{align*} we get 

\begin{align*}&\dim \left(\bigoplus_{U\in \Lambda(g)} H^{i, j}(U)\right)^{G_{6,n}}=\begin{cases} 
1 &\textup{if } (i,j)\in \{(n,0), (0,n)\}, u=0,\\
2^{v+w+s}\binom{\ell}{i} &\textup{if } 0\leq i= j \leq \ell, w\neq n, v+s\neq n,\\
2^{v+w+s}+2 &\textup{if } w=n, i=j=0,\\
2^{v+w+s}+1 &\textup{if } v+s=n, i=j=0,\\
0 &\textup{otherwise }.\\
\end{cases}
\end{align*}

Therefore the number $h^{p,q}(X_{6,n})$ is equal to the coefficient of $X^{p}Y^{q}$ in the polynomial:
\bgroup\allowdisplaybreaks \begin{align*}&X^n+Y^n+\sum_{u=0}^{n}\binom{n}{u}\sum_{v=0}^{n-u}\binom{n-u}{v}\sum_{w=0}^{n-u-v}\binom{n-u-v}{w}\sum_{s=0}^{n-u-v-w} \binom{n-u-v-w}{s}\times \\&\times\sum_{t=0}^{n-u-v-w-s}\binom{n-u-v-w-s}{t}\sum_{j=0}^{\ell}\binom{\ell}{j}\cdot 2^{v+w+s}(XY)^{j+\frac{u+2v+3w+4s+5t}{6}}+ 2\cdot (XY)^{\frac{n}{2}}+\\&+\sum_{v=0}^{n}\binom{n}{v}(XY)^{\frac{1}{6}(2v+4(n-v))}=X^{n}+Y^{n}+\sum_{u=0}^{n}\binom{n}{u}\left( \sqrt[6]{XY}\right)^{u}\sum_{v=0}^{n-u}\binom{n-u}{v}\left(2 \sqrt[6]{(XY)^2}\right)^{v}\times \\&\times\sum_{w=0}^{n-u-v}\binom{n-u-v}{w}\left(2 \sqrt[6]{(XY)^2}\right)^w\sum_{s=0}^{n-u-v-w} \binom{n-u-v-w}{s}\left(2\sqrt[6]{(XY)^2}\right)^s\times \\&\times\sum_{t=0}^{n-u-v-w-s}\binom{n-u-v-w-s}{t}\left(\sqrt[6]{(XY)^2}\right)^t\sum_{j=0}^{\ell}\binom{\ell}{j}(XY)^j+2\cdot (XY)^{\frac{n}{2}}+\\&+\left(\sqrt[6]{(XY)^2}+\sqrt[6]{(XY)^4}\right)^{n}=\\&=
X^{n}+Y^{n}+\left(1+XY+\sqrt[6]{XY}+2\sqrt[6]{(XY)^{2}}+2\sqrt[6]{(XY)^{3}}+2\sqrt[6]{(XY)^{4}} +\sqrt[6]{(XY)^{5}}\right)^{n}+\\&+2\cdot (XY)^{\frac{n}{2}}+ \left(\sqrt[6]{(XY)^2}+\sqrt[6]{(XY)^4}\right)^{n}.\end{align*}\egroup

Evaluating the above formula at $6$-th roots of unity we get $$e\left(X_{6,n}\right)=\frac{1}{6}\left(10^n+3\cdot 2^n+8\right)+4(-1)^n.$$

\subsection{$\mathbb{Z}/4\mathbb{Z}$ action}

Consider $$g^{u,v,w}=\big(\underbrace{1, \ldots, 1}_{u}, \underbrace{2, \ldots, 2}_{v},\underbrace{3, \ldots, 3}_{w}, \underbrace{0,\ldots, 0}_{n-u-v-w}\big)\in G_{4,n},$$ where $4\mid u+2v+3w$. Then $ \age\left(g^{u,v,w} \right)=\tfrac{u+2v+3w}{4}.$

Repeating the above arguments we get
\begin{align*}&\left\{F_{X_{4,n}}(X,Y)\right\}[X^{p}Y^{q}]=X^{n}+Y^{n}+\frac{1}{2}\Bigg\{\sum_{u=0}^{n}\;\sum_{v=0}^{n-u}\;\sum_{w=0}^{n-u-v}\;\sum_{j=0}^{n-u-v-w}2^{v+w}\cdot 3^{v} \times \\ &\times \binom{n}{u}\binom{n-u}{v}\binom{n-u-v}{w}\binom{n-u-v-w}{j} (XY)^{j+\frac{u+2v+3w}{4}}+\left(\sqrt[4]{(XY)^2} \right)^{n}\Bigg\}[X^{p}Y^{q}]=\\ &=\Bigg\{ X^{n}+Y^{n}+\left(1+XY+2\sqrt[4]{XY}+3\sqrt[4]{(XY)^2}+2\sqrt[4]{(XY)^3}\right)^{n}+\left(\sqrt[4]{(XY)^2}\right)^{n}\Bigg\}[X^{p}Y^{q}].
\end{align*}

Evaluating the above formula at $4$-th roots of unity we get $$e\left(X_{4,n}\right)=\frac{1}{4}(9^n+3)+3(-1)^n.$$	

\subsection{$\mathbb{Z}/3\mathbb{Z}$ action}

Take $$g^{u,v}=\big(\underbrace{1, \ldots, 1}_{u}, \underbrace{2,\ldots, 2}_{v}, \underbrace{0,\ldots, 0}_{n-u-v}\big)\in G_{3,n},$$ where $3\mid u+2v$. Then $ \age\left(g^{u,v}\right)=\tfrac{u+2v}{3},$ hence from the orbifold formula we obtain
\vspace{-1mm}
\begin{align*}&\left\{F_{X_{3,n}}(X,Y)\right\}[X^{p}Y^{q}]=\\&=\left\{X^{n}+Y^{n}+\sum_{u=0}^{n}\;\sum_{v=0}^{n-u}\;\sum_{j=0}^{n-u-v}3^{u}3^{v}\binom{n}{u}\binom{n-u}{v}\binom{n-u-v}{j}(XY)^{j+\frac{u+2v}{3}}\right\}[X^{p}Y^{q}]=\\ &=
\left\{X^{n}+Y^{n}+\left(1+XY+3\sqrt[3]{XY}+3\sqrt[3]{(XY)^{2}}\right)^{n}\right\}[X^{p}Y^{q}]=\\&=\left\{X^{n}+Y^{n}+\left(1+\sqrt[3]{XY}\right)^{3n}\right\}[X^{p}Y^{q}]=\\&=
\left\{X^{n}+Y^{n}+\frac{1}{3}\left(\left(1+\sqrt[3]{XY}\right)^{3n}+\left(1+\zeta_{3}\sqrt[3]{XY}\right)^{3n}+\left(1+\zeta_{3}^{2}\sqrt[3]{XY}\right)^{3n}\right)\right\}[X^{p}Y^{q}].
\end{align*}

In particular 
$$e\left(X_{3,n}\right)=F_{X_{3,n}}(-1,-1)=2(-1)^{n}+\frac{1}{3}\left(2^{3n}+(1+\zeta_{3})^{3n}+(1+\zeta_{3}^{2})^{3n}\right)=\frac{1}{3}\left(8^{n}+8(-1)^{n}\right).$$
\subsection{$\mathbb{Z}/2\mathbb{Z}$ action}

From \ref{lemat} we encode the Hodge numbers of fixed part of cohomology by the following generating polynomial function $$\sum_{p,q=0}^{n}\dim H^{p,q}(E_{2}^n)^{G_{2,n}}X^{p}Y^{q}= (1+XY)^{n}+(X+Y)^{n}.$$

Consider $$g^{u}=(\underbrace{1, \ldots, 1}_{u},\underbrace{0, \ldots, 0}_{n-u})\in G_{2,n},$$ an arbitrary element of $G_{2,n}$, where $u$ is even. Then 
from the orbifold formula we have
\begin{align*} &\left\{F_{X_{2,n}}(X,Y)\right\}[X^{p}Y^{q}]=\left\{(X+Y)^{n}+\sum_{u=0}^{n}\sum_{j=0}^{n-u}4^{u}\binom{n}{u}\binom{n-u}{j}(XY)^{j+\frac{u}{2}}\right\}[X^{p}Y^{q}]=\\&=
\left\{(X+Y)^{n}+\left(1+XY+4\sqrt{XY}\right)^{n}\right\}[X^{p}Y^{q}]=\\&=
\left\{(X+Y)^{n}+\frac{1}{2}\left(\left(1+XY+4\sqrt{XY}\right)^{n}+\left(1+XY-4\sqrt{XY}\right)^{n}\right)\right\}[X^{p}Y^{q}].\end{align*}

In particular 
$$e\left(X_{2,n}\right)=F_{X_{2,n}}(-1,-1)=(-2)^{n}+\frac{1}{2}(6^{n}+(-2)^{n})=\frac{1}{2}(6^{n}+3(-2)^{n}),$$

which agrees with the formula given in \cite{CH}.

\begin{Rem} We can construct other families of Calabi-Yau manifolds of Borcea-Voisin type considering quotients of products of known Calabi-Yau manifolds, especially $K3$ surfaces (\cite{Art}, \cite{Art2}, \cite{C}, \cite{Dil2}, \cite{Dil}, \cite{Rohde}), satisfying assumptions of Proposition \ref{maint}. We plan to study these generalisations in a future paper. \end{Rem}
\section{Zariski Calabi-Yau manifolds}
In this section we extend the argument given in \cite{KatsuraSchutt} to obtain higher dimensional Calabi-Yau manifolds, which are Zariski varieties. 

\begin{Def} An algebraic variety $X$ of dimension $n,$ over algebraically closed field of characteristic $p$ is called a \textit{Zariski variety} if there exists a purely inseparable dominant rational map $\PP^{n}\longrightarrow X$ of degree $p.$ \end{Def}

Zariski varieties are necessarily unirational. Katsura and Sch{\"u}tt constructed first examples of Zariski $K3$ surfaces using the classical Kummer construction in dimension 2. The crucial part of their idea was a special endomorphism of supersingular elliptic curves admitting automorphisms of order 3 and 4. We will generalize their construction to arbitrary dimension.

\subsection{$\ZZ/3\ZZ$ action}

Let $E_{3,i}$ be the elliptic curve given by the equation $y_{i}^{2}+y_{i}=x_{i}^3,$ for $i\in \{1,2,\ldots, n\}$ with the  $\zeta_{3}$ action $\tau_{3}\colon (x,y)\mapsto (\zeta_{3}x, y)$ and consider groups $$G_{i}:=\left\langle (\tau_{3}, 1,\ldots, 1, \tau_{3}^{i}),\; (1, \tau_{3}, 1,\ldots, 1, \tau_{3}^{i}),\; \ldots,\; (1,\ldots, 1,\tau_{3} , \tau_{3}^{i}) \right\rangle\simeq \ZZ_{3}^{n-1}\simeq G_{3,n}, $$
for $i=1,2.$

\begin{Lem}\label{main} The quotient variety $Y_{n}:=(E_{3,1}\times E_{3,2} \times \ldots \times E_{3,n})/G_{1}$ is rational. \end{Lem}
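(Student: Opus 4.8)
The plan is to compute the field of $G_1$-invariant rational functions explicitly and then exhibit it as a purely transcendental extension of $\CC$. First I would record that
$$\CC(E_{3,1}\times\cdots\times E_{3,n}) = \CC(x_1,y_1,\ldots,x_n,y_n), \qquad y_i^2+y_i = x_i^3,$$
and that the generator $(1,\ldots,\tau_{3},\ldots,1,\tau_{3})$ of $G_1$ (with $\tau_{3}$ in the $m$-th and the $n$-th slots) fixes every $y_i$ while sending $x_m \mapsto \zeta_{3} x_m$ and $x_n \mapsto \zeta_{3} x_n$. Hence all $y_i$ are invariant, and over the subfield $\CC(y_1,\ldots,y_n)$ the extension $\CC(y_1,\ldots,y_n)(x_1,\ldots,x_n)$ is a Kummer extension with group $(\ZZ_3)^n$ on which $G_1$ acts through the $\tau_{3}$-weights of the $x_k$. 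Since $|G_1| = 3^{n-1}$, the invariant subfield has degree $3$ over $\CC(y_1,\ldots,y_n)$, and a direct weight computation shows it is generated by the monomial $w := x_1 x_2 \cdots x_{n-1}x_n^2$, which satisfies
$$w^3 = \left(\prod_{k=1}^{n-1}(y_k^2+y_k)\right)(y_n^2+y_n)^2.$$
Thus $Y_n$ is birational to the hypersurface cut out by this equation, and it suffices to prove that $L := \CC(y_1,\ldots,y_n,w)$ is purely transcendental over $\CC$.

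Next I would remove the cube. Putting $u := w/(y_n^2+y_n)$, so that $\CC(y_1,\ldots,y_n,w) = \CC(y_1,\ldots,y_n,u)$, the defining relation becomes
$$u^3\,(y_n^2+y_n) = \prod_{k=1}^{n-1}(y_k^2+y_k).$$
The decisive feature is that after this substitution each factor $y_k^2+y_k$ and the variable $u$ enter the relation to a single power, so that the relation is \emph{linear} in $(y_1^2+y_1)$. I would therefore take $F := \CC(y_2,\ldots,y_{n-1},u)$ as a base field and regard $L$ as generated over $F$ by the two elements $y_1,y_n$ subject to
$$y_1^2+y_1 = M\,(y_n^2+y_n), \qquad M := \frac{u^3}{\prod_{k=2}^{n-1}(y_k^2+y_k)} \in F^{\times}.$$
A transcendence-degree count confirms that $y_2,\ldots,y_{n-1},u,y_n$ are algebraically independent (note that $u$ still depends on the free parameter $y_1$), and that $L = F(y_1,y_n)$ is the function field of the plane conic $C_F \colon\ y_1^2+y_1 - M(y_n^2+y_n) = 0$ over $F$.

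Finally I would prove $C_F \cong \PP^1_F$. The conic carries the obvious $F$-rational point $(y_1,y_n)=(0,0)$, and it is smooth: away from characteristic $2$ this is the nondegeneracy of the associated quadratic form, since $M\neq 0$, while in characteristic $2$ one checks that the gradient $(\partial_{y_1},\partial_{y_n}) = (1,M)$ never vanishes, so that the Artin--Schreier-type conic is still smooth. A smooth conic with a rational point is isomorphic to the projective line, so $L = F(C_F) = F(s) = \CC(y_2,\ldots,y_{n-1},u,s)$ is purely transcendental of transcendence degree $n$, whence $Y_n$ is rational.

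The only genuinely delicate point is the passage to the conic. The naive projections of $Y_n$ onto coordinate factors all have elliptic (genus one) fibres, coming from the triple-cover structure, so one must first absorb the cube into the auxiliary variable $u$ before a rational (genus zero) slice appears; this is exactly what makes the $(y_n^2+y_n)^2$ exponent in the invariant $w$ essential. I also expect the characteristic-$2$ case to demand the extra smoothness check above, since the usual completion of the square used to linearise $y^2+y$ is unavailable there. Everything else is routine Kummer theory together with transcendence-basis bookkeeping.
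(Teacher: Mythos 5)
Correct, and essentially the paper's own argument: your $u=w/(y_n^{2}+y_n)$ is precisely the paper's generator $z=x_1x_2\cdots x_{n-1}/x_n$, with the identical single relation $u^{3}(y_n^{2}+y_n)=\prod_{k=1}^{n-1}(y_k^{2}+y_k)$, and projecting your conic from the rational point $(0,0)$ (i.e.\ setting $s=y_1/y_n$, cancelling $y_n$ and solving the resulting linear equation) is the same linearisation device the paper performs with $\alpha=y_n/y_{n-1}$. One cosmetic caveat: over $\CC$, where the lemma is stated, your characteristic-$2$ discussion is vacuous, and as written it is also incomplete (the affine gradient $(1,M)$ does not rule out a singular point at infinity, where one needs $M\neq 1$), but this is harmless since projection from the rational point parametrises the conic in any characteristic.
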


\begin{proof} The monomial $x_{1}^{i_{1}}x_{2}^{i_{2}}\cdot \ldots \cdot x_{n}^{i_{n}}$ is invariant under $G_{1}$ iff $3\mid i_{n}+i_{k}$ for $1\leq k< n,$ thus 
$$\CC[Y_{n}]\simeq\mathbb{C}[y_{1}, y_{2},\ldots, y_{n}, x_{1}x_{2}\ldots x_{n-1}x_{n}^{2}, x_{1}^{2}x_{2}^{2}\ldots x_{n-1}^{2}x_{n}].$$
Now let $\displaystyle z:=\frac{x_{1}x_{2}\ldots x_{n-1}}{x_{n}}$ and observe that
$$\CC(Y_{n})=\CC(y_{1}, y_{2}, \ldots, y_{n}, z), $$ since $$x_{1}x_{2}\ldots x_{n-1}x_{n}^{2}=z^{}(y_{n}^2+y_{n})\quad \textup{and} \quad x_{1}^{2}x_{2}^{2}\ldots x_{n-1}^{2}x_{n}=z^{2}(y_{n}^2+y_{n}).$$ Moreover we have the following relation
$$z^{3}= \left(\frac{x_{1}x_{2}\ldots x_{n-1}}{x_{n}}\right)^{3}=\frac{(y_{1}^{2}+y_{1})(y_{2}^{2}+y_{2})\ldots (y_{n-1}^{2}+y_{n-1})}{y_{n}^{2}+y_{n}}$$ or equivalently	$$(y_{1}^{2}+y_{1})(y_{2}^{2}+y_{2})\ldots (y_{n-1}^{2}+y_{n-1})=z^{3}(y_{n}^2+y_{n}).$$ 
Taking $\displaystyle \alpha:=\frac{y_{n}}{y_{n-1}},$ we get the equation
$$(y_{1}^2+y_{1})(y_{2}^{2}+y_{2}) \ldots (y_{n-2}^{2}+y_{n-2})(y_{n-1}+1)=z^{3}\alpha(\alpha y_{n-1}+1),$$ from which we can compute $y_{n-1}$ and $y_{n}=\alpha y_{n-1}$ as rational functions in $y_{1},$ $y_{2},$ $\ldots,$ $y_{n-2},$ $z,\alpha.$ Hence the variety $Y_{n}$ is rational.\end{proof}

Now, consider a prime number $p\equiv 2\pmod{3}$ and the supersingular elliptic curve $E_{3}$ over a field $k$, such that $\zeta_{3}\in k$ and $\operatorname{char} k=p$, defined by equation $y^{2}+y=x^{3},$ and with the $\zeta_{3}$ action $\tau_{3}\colon (x,y)\mapsto (\zeta_{3}x, y)$. The endomorphism ring of $E_{3}$ may be represented as $$\operatorname{End}(E_{3})=\mathbb{Z}\oplus \mathbb{Z}F \oplus \mathbb{Z} \tau_{3} \oplus \mathbb{Z}\frac{(1+F)(2+\tau_{3})}{3},$$ where $F$ is a Frobenius morphism of $E_{3}$, with the relation $F\tau_{3}=\tau_{3}^{2}F$ (cf. \cite{Katsura}).

\begin{Thm} The Calabi-Yau manifold $\tylda{E_{3}^{n}/G_{2}}= X_{3,n}$ is a Zariski manifold.\end{Thm}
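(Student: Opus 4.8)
The plan is to build the required purely inseparable map explicitly, using the Frobenius endomorphism $F$ of $E_{3}$ to transport the rationality of $Y_{n}$ from Lemma \ref{main} over to the quotient $E_{3}^{n}/G_{2}$. Everything rests on the commutation relation $F\tau_{3}=\tau_{3}^{2}F$ recorded above, which is exactly the positive-characteristic statement that conjugation by Frobenius raises a cube root of unity to its $p$-th power: since $p\equiv 2\pmod 3$ one has $\zeta_{3}^{p}=\zeta_{3}^{2}$, and indeed $F\tau_{3}(x,y)=(\zeta_{3}^{p}x^{p},y^{p})=(\zeta_{3}^{2}x^{p},y^{p})=\tau_{3}^{2}F(x,y)$. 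This is precisely the ingredient that lets a single application of $F$ convert the $G_{1}$-action into the $G_{2}$-action.

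First I would introduce the endomorphism $\Phi:=\id\times\cdots\times\id\times F$ of $E_{3}^{n}$, i.e.\ the identity on the first $n-1$ factors and the $p$-power Frobenius on the last factor. Then I would verify that $\Phi$ conjugates the $G_{1}$-action into the $G_{2}$-action. For the generator $(1,\ldots,\tau_{3},\ldots,1,\tau_{3})$ of $G_{1}$ (with $\tau_{3}$ in the $k$-th and $n$-th slots), composing with $\Phi$ and using $F\tau_{3}=\tau_{3}^{2}F$ turns the last slot from $F\tau_{3}$ into $\tau_{3}^{2}F$, which is exactly the last slot of the matching generator $(1,\ldots,\tau_{3},\ldots,1,\tau_{3}^{2})$ of $G_{2}$ postcomposed with $\Phi$; on the other slots both sides agree trivially. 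Hence $\Phi\circ g=\sigma(g)\circ\Phi$ for an isomorphism $\sigma\colon G_{1}\to G_{2}$ of the two copies of $\ZZ_{3}^{n-1}$, so $\Phi$ is $\sigma$-equivariant and descends to a finite morphism $\bar{\Phi}\colon E_{3}^{n}/G_{1}\to E_{3}^{n}/G_{2}$.

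Next I would read off the degree and the inseparability of $\bar{\Phi}$ from the commuting square $\pi_{2}\circ\Phi=\bar{\Phi}\circ\pi_{1}$, where $\pi_{i}\colon E_{3}^{n}\to E_{3}^{n}/G_{i}$ are the quotient maps. Since $\Phi$ is purely inseparable of degree $p$, while each $\pi_{i}$ is separable of degree $|G_{i}|=3^{n-1}$ (separable because $p\neq 3$), multiplicativity of degrees forces $\deg\bar{\Phi}=p$ and shows $\bar{\Phi}$ is purely inseparable. Finally, Lemma \ref{main} (whose explicit rational parametrisation is characteristic-free and works verbatim over $k$ once $\zeta_{3}\in k$ and $\operatorname{char}k=p\neq 3$) supplies a birational map $\PP^{n}\dashrightarrow Y_{n}=E_{3}^{n}/G_{1}$, and the crepant resolution $X_{3,n}=\tylda{E_{3}^{n}/G_{2}}$ is birational to $E_{3}^{n}/G_{2}$. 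Composing
$$\PP^{n}\dashrightarrow E_{3}^{n}/G_{1}\xrightarrow{\;\bar{\Phi}\;}E_{3}^{n}/G_{2}\dashrightarrow X_{3,n}$$
produces a dominant rational map that is purely inseparable of degree $p$, which is exactly the definition of $X_{3,n}$ being a Zariski manifold.

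The routine parts (the conjugation identity and the degree bookkeeping) are mechanical; the one place the hypotheses genuinely enter is the relation $F\tau_{3}=\tau_{3}^{2}F$, which needs both $p\equiv 2\pmod 3$ and the supersingularity of $E_{3}$ so that $F$ is an honest endomorphism with the stated relation in $\operatorname{End}(E_{3})$. I expect the main obstacle to be expository rather than substantive: one must argue cleanly that $\Phi$ descends to a morphism and that the descended map remains purely inseparable of degree \emph{exactly} $p$, and this is handled precisely by the separability of the $G_{i}$-quotients (guaranteed by $p\neq 3$), with no further geometric input required.
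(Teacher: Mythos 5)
Your proposal is correct and is essentially the paper's own proof: the paper uses exactly the commuting square with $1\times\cdots\times 1\times F$ intertwining $G_{1}$ and $G_{2}$ (via $F\tau_{3}=\tau_{3}^{2}F$) to produce a purely inseparable degree-$p$ map $E_{3}^{n}/G_{1}\to E_{3}^{n}/G_{2}$, then invokes Lemma \ref{main} for the rationality of $E_{3}^{n}/G_{1}$. You merely make explicit the equivariance check and the degree/separability bookkeeping that the paper leaves implicit.
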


\begin{proof} Commutativity of the following diagram
	\begin{displaymath}
		\begin{diagram}
			E_{3}^n &\rTo^{G_{1}} & E_{3}^n\\
			\dTo^{1\times \ldots \times 1 \times F} & &\dTo_{1\times \ldots \times 1 \times F}\\
			E_{3}^n &\rTo_{G_{2}} & E_{3}^n
		\end{diagram}
	\end{displaymath}
leads to purely inseparable rational map $E_{3}^n/G_{1}\longrightarrow E_{3}^n/G_{2}$ of degree $p$. Since $E_{3}^n/G_{1}$ by \ref{main} is a rational variety, the theorem follows. \end{proof}

\subsection{$\ZZ/4\ZZ$ action}

Let $E_{4,i}$ be the elliptic curve given by the equation $y_{i}^{2}=x_{i}^3-x_{i},$ for $i\in \{1,2,\ldots, n\}$ with the $\zeta_{4}$ action $\tau_{4}\colon (x,y)\mapsto (-x, iy)$ and consider groups $$H_{i}:=\left\langle (\tau_{4}, 1,\ldots, 1, \tau_{4}^{i}),\; (1, \tau_{4}, 1,\ldots, 1, \tau_{4}^{i}),\; \ldots,\; (1,\ldots, 1,\tau_{4} , \tau_{4}^{i}) \right\rangle\simeq \mathbb{Z}_{4}^{n-1}\simeq G_{4,n}, $$
for $i=1,3.$

As in the previous section the following lemma holds

\begin{Lem}\label{main2} The quotient variety $Z_{n}:=(E_{4,1}\times E_{4,2} \times \ldots \times E_{4,n})/H_{1}$ is rational. \end{Lem}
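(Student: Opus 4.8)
The plan is to follow the strategy of Lemma \ref{main}, passing to the field of invariants $\CC(Z_n)=\CC(E_{4,1}\times\cdots\times E_{4,n})^{H_1}$ and exhibiting a purely transcendental system of generators (here $n\ge 2$, the case $n=1$ giving an elliptic curve). The essential new difficulty compared with the $\ZZ/3\ZZ$ case is that $\tau_4$ no longer fixes the second coordinate: it acts by $x\mapsto -x$, $y\mapsto iy$, so the functions $y_i$ are not $H_1$-invariant and one must first locate the correct invariants.

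First I would determine the invariant field. Writing $h_k$ for the generator of $H_1$ applying $\tau_4$ in the $k$-th and $n$-th factors ($1\le k\le n-1$), a monomial $\prod_j x_j^{a_j}y_j^{b_j}$ is $H_1$-invariant iff $2a_k+b_k\equiv -(2a_n+b_n)\pmod 4$ for all $k<n$. Sorting these by the residue of $2a_n+b_n$ modulo $4$ singles out the generators
\[
u_j:=x_j^2\ (1\le j\le n),\qquad z:=\frac{x_1x_2\cdots x_{n-1}}{x_n},\qquad \beta:=\frac{y_1y_2\cdots y_n}{x_n},
\]
each invariant by direct check (the sign in the numerator cancels that in the denominator). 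Using $y_j^2=x_j^3-x_j=x_j(u_j-1)$ and $\prod_j x_j=zu_n$ one gets the two relations
\[
z^2u_n=u_1u_2\cdots u_{n-1},\qquad \beta^2=z\prod_{j=1}^n(u_j-1).
\]
Since $[\CC(E_{4}^n):\CC(u_1,\dots,u_n)]=4^n$ and $|H_1|=4^{n-1}$, the invariant field is a degree-$4$ extension of $\CC(u_1,\dots,u_n)$; as $z^2\in\CC(u_1,\dots,u_n)$ and $\beta^2\in\CC(u_1,\dots,u_n,z)$, while $z\notin\CC(u_1,\dots,u_n)$ (it changes sign under $x_1\mapsto -x_1$) and $\beta\notin\CC(u_1,\dots,u_n,z)\subseteq\CC(x_1,\dots,x_n)$ (its square is not a square there), the subfield $\CC(u_1,\dots,u_n,z,\beta)$ already has degree $4$ over $\CC(u_1,\dots,u_n)$ and hence equals $\CC(Z_n)$.

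It then remains to rationalise $\CC(u_1,\dots,u_n,z,\beta)$ subject to the two relations. Here I would take $u_1,\dots,u_{n-2},z$ as independent transcendentals, solve the first relation linearly for $u_{n-1}=z^2u_n/(u_1\cdots u_{n-2})$, and substitute into the second. Writing $U=u_1\cdots u_{n-2}$ and $R=\prod_{j\le n-2}(u_j-1)$, the second relation becomes
\[
\beta^2=\frac{zR}{U}\,(u_n-1)(z^2u_n-U),
\]
a conic in $(u_n,\beta)$ over the rational field $L:=\CC(u_1,\dots,u_{n-2},z)$. This conic carries the visible $L$-rational point $(u_n,\beta)=(1,0)$, so projection from that point parametrises it and $L(u_n,\beta)=L(s)$ for one further transcendental $s$. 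Thus $\CC(Z_n)=\CC(u_1,\dots,u_{n-2},z,s)$ is purely transcendental of degree $n$ and $Z_n$ is rational.

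The main obstacle is the second step, not the last. Locating the invariants $z,\beta$ and verifying that they generate is where the $\ZZ/4\ZZ$ case genuinely departs from Lemma \ref{main}. One must also resist eliminating $u_n$ through $z$ while keeping $u_1,\dots,u_{n-1}$ as the base: that choice turns the defining equation into a cubic in $z$, i.e.\ a genus-one curve over $\CC(u_1,\dots,u_{n-1})$, obscuring rationality. Choosing $u_1,\dots,u_{n-2},z$ as the base instead---the exact analogue of retaining $y_1,\dots,y_{n-2},z,\alpha$ in Lemma \ref{main}---reduces the relation to a conic with an obvious rational point, which is what makes the argument go through.
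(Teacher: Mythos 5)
Your proof is correct, and it reaches the paper's conclusion along a visibly parallel skeleton with two genuinely different technical steps, so a comparison is worthwhile. The overall strategy is the same as the paper's: identify $\CC(Z_n)$ with the invariant field, present it by generators and relations, and rationalise by an explicit substitution; moreover your generators are equivalent to the paper's, which uses $t_i=x_i^2$, $z_1=x_1\cdots x_{n-1}/x_n$ and $z_2=y_1\cdots y_{n-1}/y_n$ --- indeed $\beta=z_2\cdot y_n^2/x_n=z_2(u_n-1)$, and your two relations $z^2u_n=u_1\cdots u_{n-1}$, $\beta^2=z\prod_{j}(u_j-1)$ combine into the paper's single quartic $z_2^4t_n(t_n-1)^2=t_1\cdots t_{n-1}(t_1-1)^2\cdots(t_{n-1}-1)^2$. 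The first real difference is how generation is proved: the paper enumerates the invariant monomials in the two congruence cases and verifies a long list of identities expressing each of them in $t_1,\dots,t_n,z_2$, whereas you replace all of this bookkeeping by a Galois-theoretic degree count, $[\CC(E_4^n):\CC(u_1,\dots,u_n)]=4^n$ against $|H_1|=4^{n-1}$ acting faithfully, so that $[\CC(Z_n):\CC(u_1,\dots,u_n)]=4$ is exactly realised by adjoining $z$ and then $\beta$ (each quadratic and neither redundant, by the sign-change and odd-valuation arguments you give); this is shorter and does not depend on the completeness of any monomial list. The second difference is the last step: the paper substitutes $\alpha=(t_n-1)/(t_{n-1}-1)$ to make its quartic relation linear in $t_{n-1}$ over $\CC(t_1,\dots,t_{n-2},z_2,\alpha)$, while you keep the base $L=\CC(u_1,\dots,u_{n-2},z)$ and parametrise the conic $\beta^2=c(u_n-1)(z^2u_n-U)$ from the rational point $(u_n,\beta)=(1,0)$; your projection $s=\beta/(u_n-1)$ gives $u_n=(s^2-cU)/(s^2-cz^2)$, which is well defined precisely because $s^2=cz^2$ would force $U=z^2$, contradicting the algebraic independence of $u_1,\dots,u_{n-2},z$ --- the one point where this route could fail, and which you correctly address. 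Both substitutions do the same job (trade one bound variable for one new transcendental, yielding $\CC(Z_n)$ purely transcendental of degree $n$); yours is the more structural and self-checking argument, the paper's the more explicit one.
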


\begin{proof} The monomial $x_{1}^{a_{1}}x_{2}^{a_{2}}\ldots x_{n}^{a_{n}}y_{1}^{b_{1}}y_{2}^{b_{2}}\ldots y_{n}^{b_{n}}$ is invariant under $H_{1}$ iff either
	\begin{enumerate}
		\item $2\mid a_{i}+a_{n}$ and $4\mid b_{i}+b_{n}$ for $0\leq i<n$, which are generated by
		$$y_{1}^4, y_{2}^4,\ldots, y_{n}^4, y_{1}^{}y_{2}^{}\ldots y_{n-1}^{}y_{n}^{3}, y_{1}^{3}y_{2}^{3}\ldots y_{n-1}^{3}y_{n}^{}, x_{1}^2, x_{2}^2, \ldots, x_{n}^{2}, x_{1}^{}x_{2}^{}\ldots x_{n-1}^{}x_{n}^{}$$
	\end{enumerate}
or
\begin{enumerate}
		\item[(2)] $2\nmid a_{i}+a_{n}$ and $b_{i}+b_{n}\equiv 2\pmod{4},$
		which are generated by
		\begin{align*}
		 &y_{1}^{2}y_{2}^{2}\ldots y_{n-1}^{2}x_{1}^{}x_{2}^{}\ldots x_{n-1}^{},\;\; y_{1}^{2}y_{2}^{2}\ldots y_{n-1}^{2}x_{n}, \;\; y_{1}y_{2}\ldots y_{n}x_{1}x_{2}\ldots x_{n-1} \\
         &y_{1}y_{2}\ldots y_{n}x_{n},\;\; y_{n}^{2}x_{1}x_{2}\ldots x_{n-1}, \;\; y_{n}^{2}x_{n},\;\; y_{1}^{3}y_{2}^{3}\ldots y_{n-1}^{3}y_{n}^{3}x_{1}^{}x_{2}^{}\ldots x_{n-1}^{} \\
         &y_{1}^{3}y_{2}^{3}\ldots y_{n-1}^{3}y_{n}^{3}x_{n}^{},\;\; y_{1}^{2}y_{2}^{2}\ldots y_{n-1}^{2}y_{n}^{4}x_{n},\;\; y_{1}^{2}y_{2}^{2}\ldots y_{n-1}^{2}y_{n}^{4}x_{1}x_{2} \ldots x_{n-1}.
		\end{align*}
\end{enumerate} 
	Let us take $t_{i}:=x_{i}^{2},$ $\displaystyle z_{1}=\frac{x_{1}x_{2}\ldots x_{n-1}}{x_{n}}$ and $\displaystyle z_{2}=\frac{y_{1}y_{2}\ldots y_{n-1}}{y_{n}}$ and observe that 
$$\CC(Z_{n})=\CC\left(t_{1}, t_{2}, \ldots, t_{n}, z_{2} \right),$$ indeed it follows from identities
\begin{align*} &y_{i}^{4}=t_{i}^{3}-2t_{i}^{2}+t_{i}^{}, \;\; y_{1}^{}y_{2}^{}\ldots y_{n-1}^{}y_{n}^{3}=z_{2}y_{n}^{4},\;\; y_{1}^{3}y_{2}^{3}\ldots y_{n-1}^{3}y_{n}^{}= z_{2}^{3}y_{n}^{4}, \;\; \\ &x_{1}^{}x_{2}^{}\ldots x_{n-1}^{}x_{n}^{} = z_{1}t_{n}^{},\;\;  y_{n}^{2}x_{n}=t_{n}^{}(t_{n}^{}-1),\;\; y_{1}^{2}y_{2}^{2}\ldots y_{n-1}^{2}x_{1}^{}x_{2}^{}\ldots x_{n-1}^{}= z_{1}^{}z_{2}^{2}\cdot(y_{n}^{2}x_{n}),\\
&y_{1}^{2}y_{2}^{2}\ldots y_{n-1}^{2}x_{n}=z_{2}^{2}\cdot(y_{n}^{2}x_{n}),\;\; y_{1}y_{2}\ldots y_{n}x_{1}x_{2}\ldots x_{n-1}=z_{2}\cdot(y_{n}^{2}x_{n}) \cdot z_{1}, \\
&y_{1}y_{2}\ldots y_{n}x_{n}=z_{2}\cdot(y_{n}^{2}x_{n}),\;\; y_{n}^{2}x_{1}x_{2}\ldots x_{n-1}=(y_{n}^{2}x_{n})\cdot z_{1},\;\; y_{1}^{3}y_{2}^{3}\ldots y_{n-1}^{3}y_{n}^{3}x_{n}^{}=z_{2}^{3}y_{n}^{4}\cdot(y_{n}^2x_{n}),\\ &y_{1}^{3}y_{2}^{3}\ldots y_{n-1}^{3}y_{n}^{3}x_{1}^{}x_{2}^{}\ldots x_{n-1}^{}=z_{2}^{3}\cdot(y_{n}^{2}x_{n})\cdot y_{n}^{4}z_{1},\;\; y_{1}^{2}y_{2}^{2}\ldots y_{n-1}^{2}y_{n}^{4}x_{n}=z_{2}^{2}\cdot(y_{2}^{2}x_{n})\cdot y_{n}^{4},\\
&y_{1}^{2}y_{2}^{2}\ldots y_{n-1}^{2}y_{n}^{4}x_{1}x_{2}\ldots x_{n-1}=z_{2}^{2}\cdot(y_{2}^{2}x_{n})\cdot y_{n}^{4}z_{1},\;\; z_{1}=\frac{z_{2}^{2}(t_{n}^{}-1)}{(t_{1}^{}-1)(t_{2}^{}-1)\ldots (t_{n-1}^{}-1)}.
\end{align*}

The variety $Z_{n}$ may be defined by the equation
$$z_{2}^{4}t_{n}(t_{n}-1)^{2}=t_{1}t_{2}\ldots t_{n-1}(t_{1}-1)^{2}(t_{2}-1)^{2}\ldots (t_{n-1}-1)^{2}.$$ Taking $\displaystyle \alpha:=\frac{t_{n}-1}{t_{n-1}-1},$ we get the equation
$$z_{2}^{4}\alpha^{2}(\alpha(t_{n-1}-1)+1)=t_{1}t_{2}\ldots t_{n-1}(t_{1}-1)^{2}(t_{2}-1)^{2}\ldots (t_{n-2}-1)^{2}, $$ which is linear in $t_{n-1},$ so $$\CC(Z_{n})=\CC(t_{1}, t_{2}, \ldots, t_{n-2}, z, \alpha).$$
\end{proof}

Now, we assume $p\equiv 3\pmod{4}.$ Consider the supersingular elliptic curve $E_{4}$ defined by the equation $y^2=x^3-x$ with order $4$ automorphism $\tau_{4}(x,y)=(-x, iy).$ The endomorphism ring of $E_{4}$ may be represented as $$\operatorname{End}(E_{4})=\mathbb{Z}\oplus \mathbb{Z}\tau_{4} \oplus \mathbb{Z} \left(\frac{1+F}{2}\right) \oplus \mathbb{Z}\tau_{4}\left(\frac{1+F}{2}\right)$$
with the relation $F\tau_{4}=\tau_{4}^{3}F$ (cf. \cite{Katsura}).

The commutativity of the diagram
\begin{displaymath}
\begin{diagram}
E_{4}^n &\rTo^{H_{1}} & E_{4}^n\\
\dTo^{1\times \ldots \times 1 \times F} & &\dTo_{1\times \ldots \times 1 \times F}\\
E_{4}^n &\rTo_{H_{3}} & E_{4}^n
\end{diagram}
\end{displaymath}

together with \ref{main2} leads to the following

\begin{Thm}\label{zari} The Calabi-Yau variety $\tylda{E_{4}^{n}/H_{3}}= X_{4,n}$ is a Zariski manifold.\end{Thm}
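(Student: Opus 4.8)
The plan is to run verbatim the argument already used for $\tylda{E_{3}^{n}/G_{2}}$, now with $H_{1},H_{3}$ in place of $G_{1},G_{2}$ and with the relation $F\tau_{4}=\tau_{4}^{3}F$ playing the role of $F\tau_{3}=\tau_{3}^{2}F$. First I would record why the square displayed above the statement commutes: writing $\Phi:=1\times\ldots\times 1\times F$ for the Frobenius in the last factor, a generator of $H_{1}$ has the form $(1,\ldots,\tau_{4},\ldots,1,\tau_{4})$ with $\tau_{4}$ in some slot $k<n$ and in slot $n$; since $\Phi$ is the identity outside the last factor and $F\tau_{4}=\tau_{4}^{3}F$ there, one gets $\Phi\circ(1,\ldots,\tau_{4},\ldots,1,\tau_{4})=(1,\ldots,\tau_{4},\ldots,1,\tau_{4}^{3})\circ\Phi$, and $(1,\ldots,\tau_{4},\ldots,1,\tau_{4}^{3})$ is exactly the matching generator of $H_{3}$. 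Thus $\Phi$ carries $H_{1}$-orbits to $H_{3}$-orbits and descends to a dominant morphism $\psi\colon E_{4}^{n}/H_{1}\to E_{4}^{n}/H_{3}$.

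Next I would verify that $\psi$ is purely inseparable of degree $p$. On function fields $\Phi^{*}$ realises $K=k(E_{4}^{n})$ as a purely inseparable extension of degree $p$ of $\Phi^{*}K$, and the intertwining just proved gives $\Phi^{*}(K^{H_{3}})=K^{H_{1}}\cap\Phi^{*}K$. A short tower computation using $[K:K^{H_{1}}]=[K:K^{H_{3}}]=4^{n-1}$ then yields $[K^{H_{1}}:\Phi^{*}(K^{H_{3}})]=p$, and for every $x\in K^{H_{1}}$ one has $x^{p}\in K^{H_{1}}\cap\Phi^{*}K=\Phi^{*}(K^{H_{3}})$, so the extension is purely inseparable. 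Hence $\psi$ is a purely inseparable dominant map of degree $p$.

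Finally I would assemble the Zariski structure. By Lemma \ref{main2} the source $E_{4}^{n}/H_{1}=Z_{n}$ is rational, so there is a birational map $\PP^{n}\dashrightarrow Z_{n}$; composing it with $\psi$ and with the inverse of the birational crepant resolution $X_{4,n}=\tylda{E_{4}^{n}/H_{3}}\to E_{4}^{n}/H_{3}$ produces a dominant rational map $\PP^{n}\dashrightarrow X_{4,n}$. Since birational maps are separable of degree one, they do not alter the inseparable degree, so the composite is purely inseparable of degree $p$, which is precisely the definition of a Zariski manifold. The one point deserving care, and the step I expect to be the main obstacle, is exactly this last bookkeeping: one must check that inserting the rational parametrisation of $Z_{n}$ on one side and the crepant resolution on the other leaves the composite dominant and keeps the inseparable degree equal to $p$, i.e. that after the birational identifications $k(X_{4,n})\cong K^{H_{3}}$ sits inside $k(\PP^{n})\cong K^{H_{1}}$ as a purely inseparable subfield of index $p$.
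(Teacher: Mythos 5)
Your proposal is correct and matches the paper's own argument: the paper likewise uses the commutativity of the displayed square intertwining $H_{1}$ and $H_{3}$ via $F\tau_{4}=\tau_{4}^{3}F$ to get a purely inseparable degree-$p$ rational map $E_{4}^{n}/H_{1}\to E_{4}^{n}/H_{3}$, and then invokes the rationality of $Z_{n}=E_{4}^{n}/H_{1}$ from Lemma \ref{main2}, exactly as in the $\ZZ/3\ZZ$ case. Your function-field verification of pure inseparability and the bookkeeping through the birational parametrisation and crepant resolution are just careful elaborations of steps the paper leaves implicit.
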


\begin{Cor} In any odd characteristic $p\not\equiv 1\pmod{12}$ there exists a unirational Calabi-Yau manifold of arbitrary dimension. \end{Cor}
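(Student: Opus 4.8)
The plan is to derive this Corollary from the two Zariski theorems of the present section together with the remark that every Zariski variety is unirational. Since both $X_{3,n}$ and $X_{4,n}$ are $n$-dimensional Calabi--Yau manifolds for arbitrary $n$, it suffices to show that every odd prime $p\not\equiv 1\pmod{12}$ satisfies the hypothesis of at least one of the two theorems, that is, $p\equiv 2\pmod 3$ or $p\equiv 3\pmod 4$.

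First I would verify this covering of congruence classes by contraposition. Suppose an odd prime $p$ satisfied neither condition. Being odd and not $\equiv 3\pmod 4$, it must be $p\equiv 1\pmod 4$; and being $\not\equiv 2\pmod 3$, it is $\equiv 0$ or $\equiv 1\pmod 3$. The case $p\equiv 0\pmod 3$ forces $p=3\equiv 3\pmod 4$, contradicting $p\equiv 1\pmod 4$, so $p\equiv 1\pmod 3$; then the Chinese Remainder Theorem gives $p\equiv 1\pmod{12}$, against our assumption. Hence every odd $p\not\equiv 1\pmod{12}$ lies in one of the two classes (equivalently, its residue modulo $12$ is one of $3,5,7,11$, which are exactly those exhausted by the two congruences).

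Next I would treat the two cases. If $p\equiv 2\pmod 3$, then $E_3\colon y^2+y=x^3$ is supersingular, so the first theorem of this section applies and $X_{3,n}=\tylda{E_3^n/G_2}$ is a Zariski, hence unirational, Calabi--Yau manifold of dimension $n$. If instead $p\equiv 3\pmod 4$, then $E_4\colon y^2=x^3-x$ is supersingular, so Theorem~\ref{zari} applies and $X_{4,n}=\tylda{E_4^n/H_3}$ is likewise a Zariski, hence unirational, Calabi--Yau $n$-fold. In either case we produce the required manifold in every dimension $n$, which completes the argument.

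I do not expect a genuine obstacle here: the entire content beyond the two theorems already proved is the elementary observation that the classes $p\equiv 2\pmod 3$ and $p\equiv 3\pmod 4$ jointly cover all odd $p\not\equiv 1\pmod{12}$. The one point that merits a word of justification is the supersingularity of $E_3$ and $E_4$ in the stated characteristics, but this is classical and is precisely the hypothesis built into the two Zariski theorems, so it requires no further work.
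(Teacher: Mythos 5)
Your proposal is correct and is exactly the argument the paper leaves implicit: the corollary follows by combining the two Zariski theorems (for $p\equiv 2\pmod 3$ via $X_{3,n}$ and $p\equiv 3\pmod 4$ via $X_{4,n}$) with the remark that Zariski varieties are unirational, the only content being the elementary observation that these congruence classes cover all odd $p\not\equiv 1\pmod{12}$. Your case analysis of the residues modulo $12$ is sound, including the edge case $p=3$, which falls under $p\equiv 3\pmod 4$.
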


We do not consider a similar construction of Zariski Calabi-Yau manifolds starting from $X_{6,n}$, since this can produce examples only in characteristic $p\equiv 2 \pmod{3}$, which is already covered by Theorem \ref{zari}.


\begin{thebibliography}{20}
	
	
		
		\bibitem{Art} M.\ Artebani,\ A. Sarti, 
		\textit{Non-symplectic automorphisms of order 3 on $K3$ surfaces}, 
		Math.\ Ann.\ 342 (2008), 903--921.
		
		\bibitem{Art2} M.\ Artebani,\ A. Sarti, 
		\textit{Symmetries of order four on $K3$ surfaces}, 
		J.\ Math.\ Soc.\ Japan 67 (2015), 503--533.
		
		\bibitem{B} C.\ Borcea, 
		\textit{$K3$ surfaces with involution and mirror pairs of Calabi-Yau manifolds}, 
		Mirror symmetry, II, 717--743, AMS/IP Stud.\ Adv.\ Math.\, 1, Amer.\ Math.\ Soc.\, Providence, RI, 1997.
		
		\bibitem{C} A.\ Cattaneo, A.\ Garbagnati,   
		\textit{Calabi-Yau 3-folds of Borcea-Voisin type and elliptic fibrations},
		Tohoku Math.\ J.\ \textbf{68} (2016), no.\ 4, 515--558.
		
		\bibitem{CR} W.\ Chen, Y.\ Ruan,
		\textit{A new cohomology theory of orbifold},
		Comm.\ Math.\ Phys.\ \textbf{248(1)}, 1--31, 2004.
		
		\bibitem{AlastCrew} A.\ Crew, M.\ Reid,
		\textit{How to calculate $A$-Hilb $\mathbb{C}^{3}$ }, Geometry of toric varieties, 129--154, S{\'e}minaires et Congr{\'e}s 6, SMF, Paris, 2002.
		
		\bibitem{CH} S.\ Cynk, K.\ Hulek,
		\textit{Higher-dimensional modular Calabi-Yau manifolds},
		Canad.\ Math.\ Bull.\ \textbf{50} (2007), 486--503.
		
		\bibitem{Dil2} J.\ Dillies,
		\textit{Generalized Borcea-Voisin construction}, 
		Lett.\ Math.\ Phys.\ 100 (2012), no. 1, 77--96.
		
		\bibitem{Dil} J.\ Dillies,
		\textit{On some order 6 non-symplectic automorphisms of elliptic $K3$ surfaces}, 
		Albanian\ J.\ Math.\ \textbf{6} (2012), no. 2, 102--114. 
		
		\bibitem{Katsura} T.\ Katsura, 
		\textit{Generalized Kummer surfaces and their unirationality in characteristic $p$},
		J.\ Fac.\ Sci.\ Univ.\ Tokyo Sect.\ IA Math., 34 (1987), 1--41.
		
		\bibitem{KatsuraSchutt} T.\ Katsura, M.\ Sch\"utt
		\textit{Zariski $K3$ surfaces},
		arXiv preprint \href{https://arxiv.org/abs/1710.08661}{arXiv:1710.08661}.
		
		\bibitem{LV} R.\ Laterveer, C.\ Vial,
		\textit{On the Chow ring of Cynk-Hulek Calabi-Yau varieties and Schreieder varieties},
		arXiv preprint \href{https://arxiv.org/abs/1712.03070}{arXiv:1712.03070}.
		
	    \bibitem{CVC} C.\ Voisin,
		\textit{Remarks on zero-cycles of self-products of varieties},
		Moduli of vector bundles, Proceedings
		of the Taniguchi Congress (M. Maruyama, ed.), Marcel Dekker New York Basel Hong Kong 1994.
		
		\bibitem{V} C.\ Voisin,
		\textit{Miroirs et involutions sur les surfaces $K3$}, 
		Ast{\'e}risque,\ (218): 273--323,\ 1993. Journ{\'e}es de G{\'e}om{\'e}trie Alg{\'e}brique d'Orsay (Orsay, 1992).
		
	    \bibitem{Rohde} J.\ C.\ Rohde,
		\textit{Cyclic coverings, Calabi-Yau manifolds and complex multiplication.}, 
		Lecture Notes in Mathematics, 1975. Springer-Verlag, Berlin, 2009. x+228 pp. ISBN: 978-3-642-
		00638-8.
		
		\bibitem{TY} T.\ Yasuda, 
		\textit{Twisted jets, motivic measure and orbifold cohomology},
		Compos.\ Math.\ \textbf{140} (2004), 396--422.
		
	\end{thebibliography}
\end{document}